\DeclareSymbolFont{largesymbols}{OMX}{yhex}{m}{n}
\DeclareMathAccent{\widehat}{\mathord}{largesymbols}{"62}
\newcommand{\Q}{{\mathbb Q}}
\newcommand{\Z}{{\mathbb Z}}
\newcommand{\N}{{\mathbb N}}
\newcommand{\R}{{\mathbb R}}
\newcommand{\F}{\mathbb{F}}
\renewcommand{\H}{{\mathbb H}}
\newcommand{\U}{\mathcal{U}}
\renewcommand{\O}{\mathcal{O}}
\newcommand{\GL}{\textnormal{GL}}
\newcommand{\SL}{\textnormal{SL}}
\newcommand{\PSL}{\textnormal{PSL}}
\newcommand{\Bass}[3]{u_{#1,#2}(#3)}
\newcommand{\suma}[1]{\widehat{#1}}
\newcommand{\inv}{{^{-1}}}
\newcommand{\GEN}[1]{\left\langle #1 \right\rangle}
\newcommand{\BA}{{\mathcal{B}_1}}
\newcommand{\BI}{{\mathcal{B}_2}}
\newcommand{\mat}[4]{\left(\begin{array}{cc} #1 & #2 \\ #3 & #4 \end{array}\right)}
\newcommand{\iso}{\cong}
\newcommand{\id}{{\operatorname{id}}}
\newcommand{\quat}[2]{\left(\frac{#1}{#2}\right)}
\newcommand{\Ram}{\textnormal{Ram}}
\newcommand{\Jac}{\textnormal{Jac}}
\theoremstyle{plain}
\newtheorem{theorem}{Theorem}[section]
\newtheorem{definition}[theorem]{Definition}
\newtheorem{method}[theorem]{Method}
\newtheorem{corollary}[theorem]{Corollary}
\newtheorem{proposition}[theorem]{Proposition}
\theoremstyle{definition}
\newtheorem{remark}[theorem]{Remark}
\def\step{%
   \@ifnextchar[ \@step{\@noitemargtrue\@step[\@itemlabel]}}
\def\@step[#1]{\item[#1]\mbox{}\\\hspace*{\dimexpr-\labelwidth-\labelsep}}
\journal{Journal of Pure and Applied Algebra}
\begin{document}

\begin{frontmatter}

\title{Describing units of integral group rings up to commensurability \tnoteref{label1}}

\author{Florian Eisele}
\ead{feisele@vub.ac.be}

\author{Ann Kiefer}
\ead{akiefer@vub.ac.be}

\author{Inneke Van Gelder}
\ead{ivgelder@vub.ac.be}

\address{Department of Mathematics, Vrije Universiteit Brussel, Pleinlaan 2, 1050 Brussels, Belgium}

\tnotetext[label1]{The research is supported by the Research Foundation Flanders (FWO - Vlaanderen), partially by FWO  project G.0157.12N}

\begin{abstract}
We restrict the types of $2\times2$-matrix rings which can occur as simple components in the Wedderburn decomposition of the rational group algebra of a finite group. This results in a description up to commensurability of the group of units of the integral group ring $\Z G$ for all finite groups $G$ that do not have a non-commutative Frobenius complement as a quotient.
\end{abstract}

\begin{keyword}
Units in integral group rings \sep Rational group representations \sep Wedderburn components

\MSC[2010] 16S34 \sep 16U60 \sep 20C05
\end{keyword}

\end{frontmatter}

\section{Introduction}

For a group $G$ we denote by $\U(\Z G)$ the unit group of the integral group ring $\Z G$. Bass \cite{bass1966} proved that if $C$ is a finite cyclic group, then the so-called Bass cyclic units generate a subgroup of finite index in $\U(\Z C)$. Next Bass and Milnor proved this result for finite abelian groups. This work stimulated the search for subgroups of finite index in the unit group of the integral group ring of finite non-abelian groups. For several classes of finite groups $G$ including nilpotent groups of odd order, Ritter and Sehgal \cite{RS1989,1991RitterSehgal,RS1991} showed that the Bass units together with the bicyclic units generate a subgroup of finite index in $\U(\Z G)$. 

Later on, Jespers and Leal \cite{JespersLeal1993} proved that the group generated by the Bass units and the bicyclic units is of finite index in $\U(\Z G)$
for finite groups $G$ which do not have any non-abelian homomorphic images that are  Frobenius complements (equivalently, are fixed point free, see Definitions \ref{fixed} and \ref{frobenius}) and, additionally, whose rational group ring $\Q G$ does not have any simple components of one of the following types:
\begin{enumerate}
\item a $2\times 2$-matrix ring over the rationals;
\item a $2\times 2$-matrix ring over a quadratic imaginary extension of the rationals;
\item a $2\times 2$-matrix ring over a non-commutative division algebra.
\end{enumerate}
The reason why some simple components are excluded is that not all congruence subgroups in the corresponding special linear groups over a maximal order are generated up to finite index by elementary matrices. We will explain this in Section \ref{preliminaries}. 

In this paper we give a classification of  all isomorphism types of $2\times 2$-matrix rings which actually occur as exceptional components (see Definition \ref{exceptional}), and all finite groups which have such an exceptional component as a faithful Wedderburn component. 
This is related to the heavy classification results of Banieqbal \cite{Banieqbal} and Nebe \cite{NebeQuat}, on which we opted to rely as little as possible. 
For the most part we use elementary techniques.
Only the proof of Theorem \ref{classification} relies on \cite{NebeQuat}, and \cite{Banieqbal} is not used at all. 
It should be noted that, mostly due to the use of the classification \cite{NebeQuat} instead of \cite{Banieqbal}, Theorem \ref{classification} can serve to significantly shorten the part in the work of Caicedo and del R\'io (\cite{2013Caicedo}) that deals with Banieqbal's classification \cite{Banieqbal}.

In Theorems \ref{main1} and \ref{main2} we prove that if $\Q G$ has a simple component which is a $2\times 2$-matrix ring over a quadratic imaginary extension of the rationals or a $2\times 2$-matrix ring over a totally definite quaternion algebra over $\Q$, then this component is a $2\times 2$-matrix ring over one of the following rings:
\begin{enumerate}
 \item \label{type1} $\Q(\sqrt{-1})$, $\Q(\sqrt{-2})$ or $\Q(\sqrt{-3})$;
\item \label{type2}$\quat{-1, -1}{\Q}$, $\quat{-1, -3}{\Q}$ or $\quat{-2, -5}{\Q}$.
\end{enumerate}
All of these fields respectively skew-fields contain a norm Euclidean maximal order (see  \cite[Proposition 6.4.1]{1963Weiss} and \cite[Theorem 2.1]{2012Fitzgerald}). This makes it possible to describe generators for a subgroup of finite index in $\SL_2(\O)$, with $\O$ the maximal order in one of the algebras listed under points \ref{type1} and \ref{type2} (see Proposition \ref{prop_generators}). Moreover, we are able to describe an effective method to describe generators for a subgroup $S$ of $\U(\Q G)$ which is commensurable with the group of units of $\Z G$ for a finite group $G$ (i.e. $S\cap\U(\Z G)$ has finite index in both $S$ and $\U(\Z G)$) provided that $G$ has no epimorphism onto a non-commutative Frobenius complement. This may be seen as a generalization of the result of Jespers and Leal \cite{JespersLeal1993} we mentioned earlier, which gives generators for a subgroup of finite index in $\U(\Z G)$ under stronger conditions on $G$.


Other constructions of subgroups of finite index of $\U(\Z G)$ for some specific classes of finite groups $G$, using a description of the center $\mathcal{Z}(\U(\Z G))$ and a description of the matrix units in $\Q G$, have been given for example in \cite{Jespers2010,2013JdROVG}.

\section{Preliminaries}\label{preliminaries}

We start by introducing some notations and definitions.

\begin{definition}
	Given a field $K$ as well as two elements $a,b\in K$ we define 
	the quaternion algebra $\quat{a, b}{K}$ as follows:
	\begin{equation*}
		\quat{a,b}{K} = \frac{K\langle i, j\rangle}{(i^2=a,\ j^2=b,\ ij=-ji)}.
	\end{equation*}
\end{definition}

The following classical theorem classifies quaternion algebras over number fields up to isomorphism, see \cite[Chapter 3, Th\'eor\`eme 3.1]{1980Vigneras} for a proof of this statement or the more general Hasse-Brauer-Noether-Albert Theorem in \cite[Theorem 32.11]{Reiner1975}.
\begin{theorem}\label{classification_quat}
Let $K$ be a number field. If $D$ is a quaternion algebra over $K$, the set $\Ram(D)\subset S(K)$ of places $v$ such that $D$ is ramified at $v$, i.e. such that $D\otimes_K K_v$ is not split, is a finite set with an even number of elements. Moreover, for any finite set $S\subset S(K)$ such that $|S|$ is even, there is a unique quaternion algebra with center $K$ such that $\Ram(D) = S$.
\end{theorem}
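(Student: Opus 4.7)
The plan is to deduce this from the fundamental exact sequence of class field theory,
\[
0 \longrightarrow \operatorname{Br}(K) \longrightarrow \bigoplus_{v\in S(K)} \operatorname{Br}(K_v) \xrightarrow{\sum \operatorname{inv}_v} \Q/\Z \longrightarrow 0,
\]
in which the local invariant $\operatorname{inv}_v$ identifies $\operatorname{Br}(K_v)$ with $\Q/\Z$ for non-archimedean $v$, with $\tfrac{1}{2}\Z/\Z$ for real $v$, and with $0$ for complex $v$. A central simple $K$-algebra $A$ splits at $v$ precisely when $\operatorname{inv}_v(A\otimes_K K_v)=0$, and since the image of $\operatorname{Br}(K)$ lies in the \emph{direct} sum, only finitely many local invariants of any Brauer class can be non-zero.

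First I would verify the finiteness and parity of $\Ram(D)$. A quaternion algebra has period at most $2$ in $\operatorname{Br}(K)$, so each of its local invariants lies in $\tfrac{1}{2}\Z/\Z$. At a ramified place the invariant equals the unique non-zero element $\tfrac{1}{2}$; finiteness of $\Ram(D)$ follows immediately from the direct-sum condition, and vanishing of the sum of invariants in $\Q/\Z$ forces $|\Ram(D)|$ to be even.

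For the second assertion, given a finite even subset $S\subset S(K)$ (necessarily containing no complex place, since those split automatically), I would prescribe $\operatorname{inv}_v = \tfrac{1}{2}$ for $v\in S$ and $\operatorname{inv}_v = 0$ for $v\notin S$. These invariants sum to zero in $\Q/\Z$, so by the exact sequence they are the invariants of a unique Brauer class $[A]\in\operatorname{Br}(K)$. Since $[A]$ has order dividing $2$, the Brauer--Hasse--Noether theorem (period equals index over a global field) implies that the underlying division algebra is either $K$ itself or a quaternion division algebra; in either case the unique $4$-dimensional central simple $K$-algebra in the class $[A]$ is a quaternion algebra with $\Ram = S$, and uniqueness up to isomorphism follows from uniqueness of $[A]$.

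The real difficulty is hidden in the class field theoretic input: exactness of the Brauer sequence and the period--index equality for number fields. Once these are granted, the proof is essentially a counting argument in $\Q/\Z$. An alternative route would be to work more constructively with Hilbert symbols and the algebras $\quat{a,b}{K}$, using an approximation theorem to find $a,b\in K^\times$ realizing any prescribed even ramification set; this avoids the abstract Brauer-group formulation but replaces it with a rather intricate local analysis, which is why I would prefer the cohomological approach.
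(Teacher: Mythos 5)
Your proof is correct: the finiteness and parity of $\Ram(D)$ follow from the direct-sum condition and the vanishing of the sum of local invariants, and existence and uniqueness follow from exactness of the Brauer sequence together with the period--index equality, exactly as you argue (your parenthetical observation that $S$ must avoid complex places is a genuine, if minor, implicit hypothesis in the statement). The paper itself gives no proof but cites Vign\'eras and the Hasse--Brauer--Noether--Albert theorem in Reiner, and your argument is precisely the standard one underlying those references, so this is essentially the same approach.
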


It is well known that for each finite group $G$ there exists a splitting $p$-modular system in the sense of the following definition. 
\begin{definition}
If $p$ is a prime, $(E,R,F)$ is called a $p$-modular system if $R$ is a complete discrete valuation ring of characteristic zero with maximal ideal $R\pi$ such that its residue field $F=R/R\pi$ has characteristic $p$ and $E$ is the field of fractions of $R$. It is called a $p$-modular splitting system for $G$ if $E$ and $F$ are both splitting fields for $G$.
\end{definition}

We recall the definitions of fixed point free groups and Frobenius complements.
\begin{definition}\label{fixed}
 A finite group $F$ is said to be fixed point free if it has an (irreducible) complex representation $\rho$ such that $1$ is not an eigenvalue of $\rho(f)$ for all $1\neq f\in F$.
\end{definition}

\begin{definition}\label{frobenius}
 A finite group $G$ is said to be a Frobenius group if it contains a proper non-trivial subgroup $H$ such that $H \cap H^g = \{1\}$ for all $g\in (G \setminus H)$. The group $H$ is called a Frobenius complement in $G$.
\end{definition}

Fixed point free groups are strongly related to Frobenius complements, i.e. a finite group G is fixed point free if and only if G has an epimorphism onto a Frobenius complement \cite[Theorem 18.1.v]{1968Passman}.

\begin{definition}\label{basicdef}
Let $G$ be a finite group. 
\begin{enumerate}
\item If $H\leq G$, we set $\suma{H} = \frac{1}{|H|}\sum_{h\in H}h$ an idempotent of $\Q G$. If $x\in G$ then we set $\suma{x}=\suma{\GEN{x}}$.
\item Let $g$ be an element of $G$ of order $n$ and $k$ and $m$ positive integers such that $k^m\equiv 1 \mod n$. Then
$$\Bass{k}{m}{g}=(1+g+\dots + g^{k-1})^{m}+\frac{1-k^m}{n}(1+g+\dots+g^{n-1})$$ is a unit of the integral group ring $\Z G$ called a \emph{Bass unit}.
\item The \emph{bicyclic units} of $\Z G$ are the elements of one of the following forms
$$ \beta_{g,h}= 1+(1-g)h(1+g+g^2+\dots+g^{n-1}),\ 
 \gamma_{g,h}= 1+(1+g+g^2+\dots+g^{n-1})h(1-g),
$$ 
where $g,h\in G$ and $g$ has order $n$. 
\item Let $\BA$ denote the group generated by the Bass units of $\Z G$ and $\BI$ the group generated by the bicyclic units of $\Z G$.
\item More generally, given a collection $\{e_1,...,e_n\}$ of idempotents of $\Q G$, we define generalized bicyclic units 
$$ \beta_{i,g}= 1+z^2(1-e_i)ge_i,\ 
 \gamma_{i,g}= 1+z^2e_ig(1-e_i),
$$ 
where $z\in \N$ is chosen of minimal absolute value with respect to the property that $ze_i$ lies in $\Z G$. We shall call the group generated by the various $\beta_{i,g}$ and $\gamma_{i,g}$ the group of \emph{generalized bicyclic units} and also denote it by $\BI$ (i.e., when we refer to $\BI$, a collection of idempotents should be given at least implicitly; for the ordinary bicyclic units this system of idempotents would be formed by the elements of the form $\suma{g}$ where $g$ ranges over all elements of $G$).
\end{enumerate}
\end{definition}

We recall some notions concerning the rational group algebra $\Q G$. Let $e_1,\dots,e_n$ be the primitive central idempotents of $\Q G$, then $$\Q G=\Q Ge_1\oplus \dots\oplus \Q G e_n,$$ where each $\Q G e_i$ is identified with the matrix ring $M_{n_i}(D_i)$ for some division algebra $D_i$. For every $i$, let $\O_i$ be an order in $D_i$. Then $M_{n_i}(\O_i)$ is an order in $\Q Ge_i$. Denote by $\GL_{n_i}(\O_i)$ the group of invertible matrices in $M_{n_i}(\O_i)$ and by $\SL_{n_i}(\O_i)$ its subgroup consisting of matrices of reduced norm $1$. 

The following proposition is a reformulation of the result of chapter 22 from \cite{Sehgal1993}.

\begin{proposition}\label{main}
Assume $G$ is a finite group and let $U\leq \U(\Z G)$ be a subgroup of the unit group of the integral group ring. Let $\Q G=\bigoplus_{i=1}^n \Q G e_i=\bigoplus_{i=1}^n M_{n_i}(D_i)$ be the Wedderburn decomposition of $\Q G$ and let $\O_i\subset D_i$ denote orders in each division ring.

Then $U$ is of finite index in $\U(\Z G)$ if and only if both of the following hold:
\begin{enumerate}
 \item The natural image of $U$ in $K_1(\Z G)$ is of finite index.
\item For each $i\in\{1,...,n\}$, the group $U$ contains a subgroup of finite index in $1\times \dots \times 1\times \SL_{n_i}(\O_i)\times 1\times \dots\times 1$, a multiplicative subgroup of $\bigoplus_{i=1}^n M_{n_i}(D_i)$. 
\end{enumerate}
\end{proposition}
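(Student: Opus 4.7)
The plan is to pass to a maximal $\Z$-order $M=\bigoplus_{i=1}^n M_{n_i}(\O_i)$ of $\Q G$ (after enlarging each $\O_i$ to a maximal order in $D_i$ if necessary). Since $\Z G$ and $M$ are both $\Z$-orders in $\Q G$ they are commensurable, and hence so are the unit groups $\U(\Z G)$ and $\U(M)=\prod_i\GL_{n_i}(\O_i)$ as subgroups of $\U(\Q G)$. This reduces the question to analyzing finite-index subgroups of $\U(M)$, where the structure is made transparent by reduced norms.

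The key tool is the exact sequence
\[
1\to \prod_{i=1}^n\SL_{n_i}(\O_i)\to \U(M)\xrightarrow{\mathrm{nrd}}\prod_{i=1}^n H_i\to 1,
\]
where $H_i$ denotes the image of the reduced norm $\GL_{n_i}(\O_i)\to Z(\O_i)^{*}$, a finite-index subgroup of $Z(\O_i)^{*}$ by the Hasse--Schilling norm theorem. By the classical finiteness of $SK_1$ for maximal orders in a semisimple $\Q$-algebra, the group $\prod_i H_i$ is commensurable with $K_1(M)$; and since $\Z G\subset M$ is an inclusion of commensurable orders, the canonical map $K_1(\Z G)\to K_1(M)$ has finite kernel and finite cokernel. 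Consequently, the image of a subgroup of $\U(\Z G)$ in $K_1(\Z G)$ has finite index if and only if its image in $\prod_i H_i$ has finite index.

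Both directions follow from here. If $U$ has finite index in $\U(\Z G)$, its image in $K_1(\Z G)$ has finite index, and for each $i$ the intersection $U\cap \SL_{n_i}(\O_i)$ has finite index in $\SL_{n_i}(\O_i)\cap\U(\Z G)$, which in turn has finite index in $\SL_{n_i}(\O_i)$ by commensurability, giving~(1) and~(2). Conversely, condition~(2) places a finite-index subgroup of $\prod_i\SL_{n_i}(\O_i)$ inside $U$, while condition~(1) (via the equivalence above) forces the image of $U$ in $\prod_i H_i$ to have finite index; the exact sequence then forces $U$ to have finite index in $\U(M)$, hence in $\U(\Z G)$. The main difficulty will lie in the bookkeeping around commensurability through the $K_1$ identifications---in particular in passing from a finite-index subgroup of $\SL_{n_i}(\O_i)\subseteq \U(\Q G)$ down to the actual subgroup of $U\subseteq\U(\Z G)$, and in identifying $K_1(\Z G)$ with $\prod_i H_i$ up to finite kernel and cokernel. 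Both ingredients are standard in the $K_1$-theory of orders, so the proof is essentially a reformulation of Chapter~22 of \cite{Sehgal1993} in the present language.
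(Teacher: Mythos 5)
The paper does not prove this proposition itself---it presents it as a reformulation of Chapter 22 of Sehgal's book---and your sketch reproduces the standard argument behind that citation: reduction to a maximal order $M$, the reduced-norm exact sequence for $\U(M)$, and finiteness of $SK_1$. The outline is correct, and the facts you defer to (surjectivity of $\U(\Z G)\to K_1(\Z G)$, finite kernel and cokernel of $K_1(\Z G)\to K_1(M)$, and finite index of the reduced-norm image in the central unit groups) are indeed standard for $\Z$-orders in semisimple $\Q$-algebras.
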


For an ideal $Q_i$ of $\O_i$ we denote by $E(Q_i)$ the subgroup of $\SL_{n_i}(\O_i)$ generated by all so-called $Q_i$-elementary matrices, that is, $E(Q_i)=\langle I+ qE_{lm} \mid  q\in Q_i,\ 1\leq l,m\leq n_i,\ l\neq m,\ E_{lm} \textrm{ the $(l,m)$-matrix unit} \rangle$. We summarize the following theorems \cite[Theorem 21.1, Corollary 21.4]{Bass1964}, \cite[Theorem 2.4, Lemma 2.6]{Vaserstein1973}, \cite[Theorem 24]{Liehl1981} and \cite[Theorem]{Venkataramana1994}.

\begin{theorem}[Bass-Vaser{\v{s}}te{\u\i}n-Liehl-Venkataramana] \label{elementary}
Let $Q_i$ be an ideal in $\mathcal O_i$.
If $n_i\geq 3$ then $[\SL_{n_i}(\O_i):E(Q_i)]<\infty$. If $n_i=2$ and if $D_i$ is different from $\Q$, a quadratic imaginary extension of $\Q$ and a totally definite quaternion algebra with center $\Q$, then $[\SL_{2}(\O_{i}):E(Q_{i})]<\infty$.
\end{theorem}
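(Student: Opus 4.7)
The plan is to split Theorem \ref{elementary} into the cases $n_i\geq 3$ and $n_i=2$, which are handled by quite different mechanisms, and then to explain why the three exceptions in the $n_i=2$ case are unavoidable.

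For $n_i\geq 3$ I would follow Bass. Any order $\O_i$ in a finite-dimensional semisimple $\Q$-algebra has stable rank at most $2$, so a sequence of $Q_i$-elementary column operations reduces an arbitrary element of $\SL_{n_i}(\O_i)$ to the identity modulo a suitable power of $Q_i$, and one is left with showing that $E(Q_i)$ contains a principal congruence subgroup of finite index. For this one uses the Chevalley commutator identity
\[
[I + a E_{ij},\ I + b E_{jk}] = I + ab\, E_{ik} \qquad (i,j,k \text{ pairwise distinct}),
\]
available precisely because $n_i\geq 3$, to enlarge $E(Q_i)$ until it contains $I + c E_{\ell m}$ for every $c$ in some power $Q_i^N$. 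Since $\O_i/Q_i^N$ is finite, this forces $[\SL_{n_i}(\O_i):E(Q_i)]<\infty$.

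For $n_i=2$ the commutator trick collapses and one must instead exploit units of $\O_i$. The hypotheses on $D_i$ are precisely what guarantees, through Dirichlet's unit theorem and its non-commutative analogue, that $\O_i^\times$ is infinite. Following Vaserstein, I would pick a unit $u\in\O_i^\times$ in sufficiently general position and combine conjugation identities of the form
\[
\mathrm{diag}(u, u\inv)\,(I + a E_{12})\,\mathrm{diag}(u\inv, u) = I + (uau)\, E_{12}
\]
together with their transposes and iterated products to show that a sufficiently deep principal congruence subgroup of $\SL_2(\O_i)$ is already contained in $E(Q_i)$. The refinements due to Liehl and Venkataramana, needed to cover orders in non-commutative division rings, rest either on Euclidean-style computations with the reduced norm or on an appeal to strong approximation together with Margulis's congruence subgroup property in higher rank.

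The main obstacle, and simultaneously the reason for the three excluded families of $D_i$, is that in precisely those cases $\O_i^\times$ is finite: the $\SL_2$-argument cannot even be started, and in fact $[\SL_2(\O_i):E(Q_i)]$ is actually infinite there, so any proof attempt must work hard to use up exactly the hypothesis that at least one archimedean place of the centre $Z(D_i)$ yields a non-compact factor of $\SL_2(D_i\otimes_\Q \R)$.
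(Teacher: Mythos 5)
First, note that the paper does not prove Theorem \ref{elementary} at all: it is explicitly presented as a summary of results quoted from Bass, Vaser{\v{s}}te{\u\i}n, Liehl and Venkataramana, so there is no in-paper argument for your sketch to be measured against. Judged on its own terms, your outline points at the historically correct mechanisms (stable range and commutator calculus for $n_i\geq 3$, exploitation of an infinite unit group for $n_i=2$), but it contains a genuine logical gap in the only place where you attempt an actual deduction. For $n_i\geq 3$ you correctly reduce to showing that $E(Q_i)$ contains a principal congruence subgroup, but the argument you then give only shows $E(Q_i)\supseteq E(Q_i^{N})$, i.e.\ that $E(Q_i)$ contains all \emph{elementary} matrices of level $Q_i^{N}$ --- not that it contains $\ker\bigl(\SL_{n_i}(\O_i)\to \SL_{n_i}(\O_i/Q_i^{N})\bigr)$. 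The step ``$\O_i/Q_i^{N}$ is finite, hence the index is finite'' is a non sequitur: applied verbatim to $n_i=2$ and $\O_i=\Z$ it would prove that $E(4\Z)$ has finite index in $\SL_2(\Z)$, which is false. Closing this gap is precisely the content of Bass's theorem; one needs the relative commutator formula (valid only for $n_i$ above the stable range) to see that $\SL_{n_i}(\O_i,Q_i)/E(Q_i)$ is a quotient of the relative $SK_1(\O_i,Q_i)$, and then the nontrivial finiteness theorem for $SK_1$ of orders in semisimple $\Q$-algebras.

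For $n_i=2$ your text is a pointer to the literature rather than a proof, which is acceptable given the depth of the Liehl and Venkataramana results, but your closing characterization of the exceptional cases is wrong: $\SL_2(D_i\otimes_\Q\R)$ \emph{always} has a non-compact factor, since $\SL_2(\R)$, $\SL_2(\C)$ and $\SL_2(\H)$ are all non-compact, so ``at least one non-compact archimedean factor'' excludes nothing. The correct dividing line, and the actual hypothesis in Venkataramana's theorem, is that the sum of the real ranks of the archimedean factors be at least $2$; for the algebras $M_2(D_i)$ occurring here this is equivalent to the condition you state earlier in the paragraph, namely that $\O_i^\times$ be infinite (for the quaternionic case this uses Kleinert's result quoted after Theorem \ref{elementary}). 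You should also note that the claim that the index is \emph{infinite} in the three excluded cases requires $Q_i$ to be a proper ideal (for $Q_i=\O_i=\Z$ one has $E(\Z)=\SL_2(\Z)$ by the Euclidean algorithm, cf.\ Proposition \ref{prop_generators}).
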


Note that Kleinert \cite{2000Kleinert} proved that if $D_i$ is non-commutative then $\SL_1(\O_i)$ is finite if and only if $D_i$ is a totally definite quaternion algebra. 

Since the image of $\BA$ in $K_1(\Z G)$ is always of finite index \cite[Theorem 5]{bass1966}, one is interested in finding additional units such that the group generated by $\BA$ and these additional units satisfies condition 2 of Proposition \ref{main}. Often the bicyclic units can be used for this purpose, but our main focus will lie on groups $G$ for which this is not the case.

This leads us to the definition of exceptional components. Note that this is a weaker version of the definition used in \cite{JespersLeal1993}, since the result of Venkataramana stated in Theorem \ref{elementary} immediately leads to a strengthening of the results of \cite{JespersLeal1993}. 
\begin{definition}\label{exceptional}
 Let $G$ be a finite group. An exceptional component of $\Q G$ is a simple component of the following types:
\begin{enumerate}
 \item[type 1:] a non-commutative division ring other then a totally definite quaternion algebra,
 \item[type 2:] a $2\times 2$-matrix ring over the rationals, a quadratic imaginary extension of the rationals or over a totally definite quaternion algebra over $\Q$.
\end{enumerate}
\end{definition}

The following result is a reformulation of \cite[Theorem 3.3]{JespersLeal1993}.
\begin{proposition}\label{Jespers}
Let $G$ be a finite group such that $\Q G=\bigoplus_{i=1}^n M_{n_i}(D_i)$ has no exceptional components. Let $e_1,...,e_{{k}}$ be a set of idempotents in $\Q G$ such that for each irreducible representation $\Delta_i: \Q G \rightarrow M_{n_i}(D_i)$ with $n_i> 1$ there is some index $j(i)$ such that $\Delta_i(e_{j(i)}) \notin \{0, 1\}$ (i.e. $\Delta_i(e_{j(i)})$ is a non-central idempotent in $M_{n_i}(D_i)$).
Then the Bass cyclic units together with the generalized bicyclic units associated with $e_1,...,e_{{k}}$ generate a subgroup of finite index in $\U(\Z G)$. 
\end{proposition}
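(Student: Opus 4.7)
The plan is to apply Proposition \ref{main} to the subgroup $U \leq \U(\Z G)$ generated by $\BA$ together with the generalized bicyclic units $\beta_{i,g}, \gamma_{i,g}$ associated to the idempotents $e_1,\ldots,e_{{k}}$. It suffices to verify the two conditions there. Condition (1), finite index image in $K_1(\Z G)$, holds already for the Bass units alone by \cite[Theorem 5]{bass1966}, as noted in the paragraph following Theorem \ref{elementary}. Condition (2) must be checked component by component, and for $n_i = 1$ the component is commutative and is again controlled by the Bass units through the same theorem. The substantive case is $n_i \geq 2$.

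Fix such an $i$, and set $\bar e = \Delta_i(e_{j(i)})$, which by hypothesis is a non-trivial idempotent of $M_{n_i}(D_i)$. The generalized bicyclic units project to
\[
\Delta_i(\beta_{j(i),g}) = 1 + z^2(1-\bar e)\Delta_i(g)\bar e, \qquad \Delta_i(\gamma_{j(i),g}) = 1 + z^2\bar e \Delta_i(g)(1-\bar e).
\]
Since $\Q G$ is $\Q$-spanned by $G$, as $g$ varies the elements $(1-\bar e)\Delta_i(g)\bar e$ span $(1-\bar e)M_{n_i}(D_i)\bar e$ over $\Q$, so their $\Z$-span is a full $\Z$-lattice $L_1$ in that Peirce block; analogously one obtains a full lattice $L_2\subseteq \bar e M_{n_i}(D_i)(1-\bar e)$. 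After changing basis so that $\bar e$ is the diagonal block idempotent $\mathrm{diag}(I_r,0)$ with $0<r<n_i$, the matrices $1+X$ with $X\in L_1$ are strictly block lower triangular with $X^2=0$, hence form an abelian subgroup of $\SL_{n_i}(\O_i)$ isomorphic to $(L_1,+)$; similarly for $L_2$ in the upper position.

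The heart of the argument is then to show that these two families generate, up to finite index, the elementary subgroup $E(Q_i)$ for some non-zero two-sided ideal $Q_i$ of $\O_i$. This uses Steinberg-type commutator identities of the shape $[I + xE_{ab}, I + yE_{cd}] = I + xyE_{ad}$ when $b=c$ and $a\neq d$: commutators of elements of $1+L_1$ and $1+L_2$ yield elementary matrices supported within a single diagonal block, and combined with the ``across-block'' generators $1+L_1$, $1+L_2$ themselves this produces all elementary matrices with entries in some non-zero ideal $Q_i$ of $\O_i$. Once $E(Q_i)\subseteq U$ up to finite index, the hypothesis that $\Q G$ has no exceptional components lets me invoke Theorem \ref{elementary} (the Bass-Vaser{\v{s}}te{\u\i}n-Liehl-Venkataramana theorem) to conclude that $E(Q_i)$ has finite index in $\SL_{n_i}(\O_i)$, thereby securing condition (2) for that component.

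The main obstacle is twofold. First, the Steinberg commutator bookkeeping: one must check that the commutators of the lattice-indexed families $\{1+X:X\in L_1\}$ and $\{1+Y:Y\in L_2\}$ indeed exhaust elementary matrices for a genuinely non-zero ideal $Q_i$, which requires using that both lattices are full and combining with conjugation by elements of $U$. Second, a more delicate point: condition (2) in Proposition \ref{main} asks for finite index in $\SL_{n_i}(\O_i)$ embedded as identity in the other components, not merely that $U$ projects onto such a subgroup. Passing from the latter to the former relies on the standard arithmetic fact that $\U(\Z G)$, as an arithmetic lattice in $\prod_i \GL_{n_i}(D_i)$, has finite-index intersection with each simple factor, so that projections and intersections agree up to commensurability.
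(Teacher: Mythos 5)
Your overall strategy is the right one and is essentially the proof the paper is pointing at: the paper gives no argument of its own here but simply cites \cite[Theorem 3.3]{JespersLeal1993} together with the observation that Venkataramana's result (Theorem \ref{elementary}) upgrades it to the weaker notion of exceptional component. Your reduction to Proposition \ref{main}, the use of \cite[Theorem 5]{bass1966} for the $K_1$-condition, and the plan of showing that the generalized bicyclic units attached to $e_{j(i)}$ generate $E(Q_i)$ for a non-zero ideal $Q_i$ and then invoking Theorem \ref{elementary} is exactly the Jespers--Leal argument (their Corollary 4.1, which this paper itself invokes later in the proof of Method \ref{method}). The ``Steinberg bookkeeping'' you flag as the first obstacle is genuinely the technical content of that corollary, so leaving it as a named obstacle is acceptable for a sketch. (Two small slips: for $n_i=1$ the component need not be commutative --- it may be a totally definite quaternion algebra, excluded from type 1 exceptionality --- but then $\SL_1(\O_i)$ is finite by Kleinert's result, so condition (2) still holds trivially; and it is this finiteness, not the Bass units, that settles those components.)

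The one step that would actually fail as written is your resolution of the second obstacle. The ``standard arithmetic fact'' that a lattice in $\prod_i \GL_{n_i}(D_i)$ with large projections has finite-index intersection with each factor is false in general: irreducible lattices (already $\U(\Z[\sqrt{2}])$ inside $\R\times\R$, or Hilbert modular groups) have large projections and trivial intersection with each factor, and this phenomenon is precisely why exceptional components have to be excluded in the first place. The correct way to pass from projections to the embedded copy $1\times\cdots\times\SL_{n_i}(\O_i)\times\cdots\times 1$ is not a general lattice argument but a direct construction: since $e_{j(i)}$ is an idempotent of $\Q G$, the elements $(1-e_{j(i)})\,x\,e_{j(i)}$ with $x\in \Q G e_i'$ (where $e_i'$ is the primitive \emph{central} idempotent of the $i$-th component) are square-zero and supported in the $i$-th Wedderburn component only; after clearing denominators, the corresponding unipotent units $1+m(1-e_{j(i)})xe_{j(i)}$ lie in $\Z G$ and are literally of the form $1\times\cdots\times(\text{elementary matrix})\times\cdots\times 1$. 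One then shows that enough of these lie in the group generated by your $\beta_{j(i),g},\gamma_{j(i),g}$ to exhaust $E(Q_i)$ for a non-zero ideal $Q_i$; this is what \cite[Corollary 4.1]{JespersLeal1993} does. With that substitution your argument goes through and coincides with the cited proof.
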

We will restrict the types of exceptional components that can actually occur in group rings, and we will show that this restriction results in a more general method of constructing a subgroup of $\U (\Q G)$ which is commensurable with $\U (\Z G)$.

\section{Main results}

In this section we restrict the type of $2\times 2$-matrices which can occur as simple components in the Wedderburn decomposition of $\Q G$ for finite groups $G$. We also give a classification of those finite groups which have a faithful exceptional $2\times 2$-matrix ring component. 

\begin{theorem}\label{main1}
If $\Q G$ has a simple component which is a $2\times 2$-matrix ring over a quadratic imaginary extension of the rationals, then this component is a $2\times 2$-matrix ring over one of the following fields:
\begin{enumerate}
\item \label{main1type2} $\Q(\sqrt{-1})$, 
\item \label{main1type3} $\Q(\sqrt{-2})$ or 
\item \label{main1type4 }$\Q(\sqrt{-3})$.
\end{enumerate}
Moreover, a finite subgroup $G$ of $\U(M_2(K))$, where $K$ is one of the above quadratic imaginary extension of the rationals, is solvable and $|G|=2^a 3^b$ for $a,b\in$ $\N$.
\end{theorem}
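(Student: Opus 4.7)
The plan is to translate the component $M_2(K)$ into the existence of an absolutely irreducible complex representation $\rho:G\to\GL_2(\C)$ of degree $2$ whose character $\chi$ satisfies $\Q(\chi)=K$ (the Schur index is automatically $1$, since the underlying division algebra is $K$ itself). Since $K\neq\Q$, there exists some $g_0\in G$ with $\chi(g_0)\in K\setminus\Q$; the entire argument revolves around this single element.

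The key observation is an elementary norm computation. As $K$ is imaginary quadratic, complex conjugation coincides with the non-trivial element of $\Gal(K/\Q)$. Writing $\chi(g_0)=\lambda_1+\lambda_2$ for the eigenvalues of $\rho(g_0)$ (necessarily roots of unity, since $\rho(g_0)$ has finite order),
\[ N_{K/\Q}(\chi(g_0)) \;=\; \chi(g_0)\overline{\chi(g_0)} \;=\; (\lambda_1+\lambda_2)(\lambda_1\inv+\lambda_2\inv) \;=\; 2+\mu+\mu\inv \;\in\; \Q, \]
where $\mu:=\lambda_1\lambda_2\inv$. Since $\mu+\mu\inv$ generates the maximal real subfield of $\Q(\mu)$ over $\Q$, its rationality forces $[\Q(\mu):\Q]\leq 2$, so the order of $\mu$ lies in $\{1,2,3,4,6\}$. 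A case analysis on this order, based on the factorization $\chi(g_0)=\lambda_2(1+\mu)$, now pins down $K$: orders $1$ or $3$ make $\chi(g_0)$ itself a root of unity up to sign, so $K$ is cyclotomic and imaginary quadratic, hence $K\in\{\Q(\sqrt{-1}),\Q(\sqrt{-3})\}$; order $2$ gives $\chi(g_0)=0$, excluded by the choice of $g_0$; order $4$ yields $\chi(g_0)^2=2\cdot(i\lambda_2^2)$ via $(1+i)^2=2i$, and the imaginary quadratic constraint leaves $K\in\{\Q(\sqrt{-1}),\Q(\sqrt{-2})\}$; order $6$ yields $\chi(g_0)^2=3\cdot(\lambda_2^2\zeta_6)$ via $(1+\zeta_6)^2=3\zeta_6$, forcing $K=\Q(\sqrt{-3})$. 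In the last two cases the rejected square roots are exactly those that would drag $\sqrt{2}$ or $\sqrt{3}$ into $K$.

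For the second assertion, let $G\leq\GL_2(K)$ be finite with $K$ one of the three listed fields, and suppose a prime $p>3$ divides $|G|$. Pick $g\in G$ of order $p$; its eigenvalues $\lambda_1,\lambda_2$ are $p$-th roots of unity not both equal to $1$. If $\lambda_1=\lambda_2$, then $g$ is a scalar matrix, so $\lambda_1\in K$ is a primitive $p$-th root of unity; but $K$ only contains roots of unity of order dividing $6$, forcing $p\in\{2,3\}$. Otherwise $\mu=\lambda_1\lambda_2\inv$ has order exactly $p$, while the norm computation above, applied to $\tr(g)\in K$, forces the order of $\mu$ into $\{1,2,3,4,6\}$, again giving $p\in\{2,3\}$. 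Either way we contradict $p>3$. Hence $|G|=2^a 3^b$, and Burnside's $p^a q^b$-theorem delivers solvability.

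The only mildly delicate part of the argument is the case analysis on the order of $\mu$ for orders $4$ and $6$, where one has to verify that among the square roots $\sqrt{2\zeta}$ or $\sqrt{3\zeta}$ (with $\zeta$ a root of unity) precisely those giving the three listed fields lie in an imaginary quadratic extension of $\Q$.
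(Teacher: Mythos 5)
Your argument is correct, but it follows a genuinely different route from the paper. The paper embeds $G$ faithfully into $M_2(K)$ and works with minimal polynomials of group elements: degree bounds over $\Q$ (at most $4$) and over $K$ (at most $2$) restrict the primes dividing $|G|$ to $\{2,3,5\}$, an explicit analysis of the constant and linear coefficients of the minimal polynomial of a putative element of order $5$ eliminates that prime, and then bounds on the exponent force $K\subseteq\Q(\zeta_8)$ or $K\subseteq\Q(\zeta_{12})$, after which Galois theory lists the quadratic subfields and the real ones are discarded. You instead isolate a single element $g_0$ with irrational character value and exploit the identity $N_{K/\Q}(\chi(g_0))=(\lambda_1+\lambda_2)(\lambda_1^{-1}+\lambda_2^{-1})=2+\mu+\mu^{-1}$, which forces $\mu=\lambda_1\lambda_2^{-1}$ to have order in $\{1,2,3,4,6\}$; the explicit factorizations $(1+i)^2=2i$ and $(1+\zeta_6)^2=3\zeta_6$ then pin down $K$ directly. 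I checked the ``delicate'' step you flag: in the order-$4$ and order-$6$ cases one has $\chi(g_0)^2=2\zeta$ resp. $3\zeta$ with $\zeta$ a root of unity lying in $K$ (hence of order in $\{1,2,3,4,6\}$), and a short $p$-adic valuation argument (e.g. $2$ is inert in $\Q(\zeta_3)$, $3$ is inert in $\Q(i)$) confirms that the only imaginary quadratic fields arising are the three listed ones. Your approach is more economical for identifying $K$ and reuses the same norm computation to exclude primes $p>3$ in the second assertion, whereas the paper's route has the side benefit of bounding the exponent of $G$ (no elements of order $16$ or $9$), information it exploits later in Proposition \ref{finiteproblem} and Theorem \ref{main2}. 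Two cosmetic slips that do not affect correctness: for $\mu$ of order $1$ the value $\chi(g_0)=2\lambda_2$ is twice a root of unity rather than a root of unity up to sign (but $\Q(2\lambda_2)=\Q(\lambda_2)$, so the conclusion stands), and the roots of unity in the fields $K$ have order in $\{1,2,3,4,6\}$ rather than dividing $6$ (note $i\in\Q(\sqrt{-1})$); the only primes in that set are still $2$ and $3$.
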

\begin{proof}
Let $K=\Q(\sqrt{-d})$ for a positive square-free integer $d$. Assume that there is a finite group $G$ and  a primitive central idempotent $e$ of $\Q G$ such that the simple component $\Q Ge$ is isomorphic to $M_2(K)$. We can assume without loss of generality that the $\Q$-representation of $G$ determined by $e$ is faithful and hence $G$ can be embedded into $M_2(K)$. We will identify $G$ with its image in $M_2(K)$.

Since the characteristic polynomial of a matrix in $M_2(K)$ always has degree 2 over $K$ and degree 4 over $\Q$, we conclude that the minimal polynomial of any element in $G$ over $\Q$ has degree 1, 2 or 4. Moreover, for any prime $p$, we know that the $\Q$-irreducible cyclotomic polynomial $\frac{X^p-1}{X-1}$ divides the minimal polynomial of $g\in G$ over $\Q$ whenever $g$ has order $p$. Therefore we deduce that if the order of an element of $G$ is a prime, then this prime must be equal to either $2$, $3$ or $5$, as these are the only primes whose associated cyclotomic polynomials have admissible degrees. Hence $|G|=2^x3^y5^z$ for some integers $x,y,z$.

We claim that $G$ has no elements of order 5.  Indeed, suppose $g \in G$ and $o(g)=5$. Then the minimal polynomial of $g$ over $K$, which we shall denote by $\mu_g$, divides $X^5-1$ and has degree at most 2. Moreover $\mu_g$ cannot be of degree $1$, because then $g$ would be a scalar matrix and a scalar matrix in $M_2(K)$ cannot have order $5$. 
Clearly  $X^5-1$ factorizes completely over the field $\Q(\zeta_5)$, where $\zeta_5$ is a $5$th root of unity. Indeed in $\Q(\zeta_5)$, 
$$X^5-1=\left(X-1\right)\left(X-\zeta_5 \right)\left(X-\zeta_5^2 \right)\left(X-\zeta_5^3 \right)\left(X-\zeta_5^4 \right).$$
Consequently $\mu_g$ is a product of two out of these five terms and we have two possibilities:
\begin{enumerate}
\item The constant term of $\mu_g$ is of the form $\zeta_5^{i}$ for some $1 \leq i \leq 4$. Thus $\zeta_5^{i} \in K$ and hence also $\zeta_5 \in K$. This implies that $\left[ K:\Q \right] \geq 4$, which contradicts the assumption that $\left[ K:\Q \right] = 2$. 
\item The constant term of $\mu_g$ is equal to one, and the coefficient of $X$ is equal to $-(\zeta_5+\zeta_5^{4}) \in \R - \Q$ or $-(\zeta_5^2+\zeta_5^{3}) \in \R - \Q$, which would imply that 
$K$ contains the totally real field $\Q[\zeta_5 + \zeta_5^{-1}]=\Q [\sqrt{5}]$, which is impossible since by assumption $K$ is a quadratic imaginary extension of $\Q$.
\end{enumerate}
Hence $|G|=2^x3^y$ for some integers $x,y$. By Burnside's $p^aq^b$-theorem, this implies that $G$ is solvable. This proves our claim on the solvability and possible orders of finite subgroups of $\U(M_2(K))$.

Assume that $g\in G$ has order $p^n$ with either $p=2$ and $n > 3$ or $p=3$ and $n>1$.  In this case the minimal polynomial of $g$ over $\Q$ is divisible by $\frac{X^{p^n}-1}{X^{p^{n-1}}-1}$, which has degree bigger than 4, a contradiction.

Now let us consider the case where $G$ has an element $g$ of order 8. Denote by $\mu_{\Q,g}$ and $\mu_{K,g}$ the minimal polynomials of $g$ over $\Q$ and $K$ respectively. Their degrees are 4 and 2, respectively. The field $\Q(\zeta_8)\cong \frac{\Q[X]}{(\mu_{\Q,g})}$ is an extension of $\Q$ of degree $4$ and the field $K(\zeta_8) \cong \frac{K[X]}{(\mu_{K,g})}$ is an extension of $K$ of degree $2$. However, since  $K/\Q$ is also a field extension of degree 2 and $\Q(\zeta_8)\subseteq K(\zeta_8)$, we have that $\Q(\zeta_8)=K(\zeta_8)$ and hence $K\subseteq \Q(\zeta_8)$. Using Galois' fundamental theorem we conclude that $K$ is either $\Q(\sqrt{-1})$, $\Q(\zeta_8+\zeta_8^{-1})$ or $\Q(\sqrt{-2})$. Since $K$ is assumed to be totally imaginary, we can exclude $\Q(\zeta_8+\zeta_8^{-1})$.

Assume from now on that $G$ has no element of order 8. Then the exponent of $G$ divides 12 and hence $K\subseteq \Q(\zeta_{12})$ ($\Q(\zeta_{12})$ is even a splitting field of $G$). Again using Galois' theorem we deduce that $K$ is either $\Q(\sqrt{-1})$, $\Q(\sqrt{-3})$ or $\Q(\sqrt{3})$. We can exclude $\Q(\sqrt{3})$ since this is a real field extension of $\Q$. This finishes the proof.
\end{proof}

\begin{remark}\label{rationals}
	Note that not only finite subgroups of $\U(M_2(K))$ for  quadratic imaginary field extensions $K$ but also finite subgroups of $\mathcal U(M_2(\Q))$ are solvable.
	This is clear since the only possible prime power orders for elements of finite order in $\U(M_2(\Q))$ are 1, 2, 3 and 4 (again by looking at the degrees of the corresponding cyclotomic polynomials, and noting that the degree of the minimal polynomial of an element is bounded by two).
\end{remark}

The following proposition turns the classification of all finite subgroups
of $\U(M_2(K))$ for $K\in\left\{\Q, \Q(\sqrt{-1}), \Q(\sqrt{-2}), \Q(\sqrt{-3})\right\}$ into a finite problem.
\begin{proposition}\label{finiteproblem}
	Let $K\in\left\{\Q, \Q(\sqrt{-1}), \Q(\sqrt{-2}), \Q(\sqrt{-3})\right\}$, and let $G\leq \U(M_2(K))$ be a finite group. 
	Then $G$ can be embedded in the finite group $\GL(2,25)$.
\end{proposition}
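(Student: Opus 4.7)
The plan is to first conjugate $G$ into $\GL_2(\O_K)$ via a standard lattice construction, and then to reduce modulo a prime of $\O_K$ above $5$. The residue field will embed into $\F_{25}$, and the reduction will turn out to be injective on $G$ because, by Theorem~\ref{main1} and Remark~\ref{rationals}, $|G|=2^a 3^b$ is coprime to $5$.

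Concretely, I would start from any $\O_K$-lattice $L_0\subseteq K^2$ (for example $L_0=\O_K^2$) and set $L=\sum_{g\in G} g L_0$. As a finite sum of finitely generated $\O_K$-submodules of $K^2$ containing a $K$-basis, $L$ is an $\O_K$-lattice of full rank, and it is $G$-stable by construction. Each of the rings $\Z$, $\Z[\sqrt{-1}]$, $\Z[\sqrt{-2}]$, $\Z[\zeta_3]$ is a principal ideal domain, so $L$ is free of rank $2$ and a choice of basis gives an injection $G\hookrightarrow \GL_2(\O_K)$.

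Next I would analyze the factorization of $5$ in $\O_K$. In $\Z[\sqrt{-1}]$ the prime $5$ splits (since $-1\equiv 2^2\pmod 5$), whereas in $\Z[\sqrt{-2}]$ and $\Z[\zeta_3]$ it is inert (the polynomials $X^2+2$ and $X^2+X+1$ have non-square discriminants modulo $5$). In every case $5$ is unramified, and for any prime $\mathfrak p$ of $\O_K$ above $5$ the residue field $\O_K/\mathfrak p$ is $\F_5$ or $\F_{25}$; both embed canonically into $\F_{25}$, giving $\GL_2(\O_K/\mathfrak p)\hookrightarrow \GL(2,25)$.

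It remains to see that the reduction $\pi\colon \GL_2(\O_K)\to \GL_2(\O_K/\mathfrak p)$ is injective on $G$. The kernel is the principal congruence subgroup at $\mathfrak p$, and passing to the $\mathfrak p$-adic completion $\widehat{\O}_{K,\mathfrak p}$ it is standard that the filtration $I+\mathfrak p^n M_2(\widehat{\O}_{K,\mathfrak p})$ has successive quotients isomorphic as additive groups to $M_2$ over the residue field, so each has exponent $5$; consequently the kernel is a pro-$5$ group and every torsion element in it has $5$-power order. Since $\gcd(|G|,5)=1$, we get $G\cap \ker \pi=1$, and composing $\pi|_G$ with $\GL_2(\O_K/\mathfrak p)\hookrightarrow \GL(2,25)$ yields the desired embedding. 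The only non-trivial ingredient is the pro-$5$ statement on $\ker \pi$, which is a standard lemma on congruence subgroups in characteristic zero.
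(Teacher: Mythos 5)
Your proof is correct and follows essentially the same strategy as the paper's: reduce modulo (a power of) a prime above $5$ so that the image lands in $\GL(2,25)$, and observe that the congruence kernel is a (pro-)$5$-group while $|G|=2^a3^b$ by Theorem~\ref{main1} and Remark~\ref{rationals}. The only cosmetic difference is that you conjugate $G$ into $\GL_2(\O_K)$ globally via a $G$-stable lattice over the PID $\O_K$, whereas the paper embeds $K$ into $\Q_{5^2}$ and invokes the uniqueness up to conjugation of the maximal order $M_2(\Z_{5^2})$; both steps serve the same purpose.
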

\begin{proof}
	The finite field $\F_{25}$ contains roots of $-1$, $-2$ and $-3$. By Hensel's lemma it follows that
	the given fields $K$ can all be embedded in $\Q_{5^2}$, the unramified extension of $\Q_5$ of degree 2. Let $\Z_{5^2}$ be the integral closure of $\Z_5$ in $\Q_{5^2}$, and let $\pi$ denote a generator of the maximal ideal of $\Z_{5^2}$ (so, in particular, $\Z_{5^2}/\pi \Z_{5^2} \iso \F_{25}$). The algebra $M_2(\Q_{5^2})$ contains the maximal order $M_2(\Z_{5^2})$, which is unique up to conjugation. Therefore we can assume without loss that $G \leq \U(M_2(\Z_{5^2}))$.
	There is some $n$ such that $G \cap (1+5^n\cdot M_2(\Z_{5^2})) = \{1\}$, and therefore we can again assume without loss that $G \leq \mathcal U(M_2(\Z_{5^2}/5^n\cdot \Z_{5^2}))$. Consider the short exact sequence
	\begin{equation}
		1 \longrightarrow (1+\pi\cdot M_2(\Z_{5^2})) / (1+5^n\cdot M_2(\Z_{5^2})) \longrightarrow \mathcal U(M_2(\Z_{5^2}/5^n\Z_{5^2})) \longrightarrow \GL(2,25) \longrightarrow 1. 
	\end{equation}
	The leftmost term in this sequence is a $5$-group, and we know by Theorem \ref{main1} that $2$ and $3$ are the only possible prime divisors of the order of $G$. Therefore $G$ intersects trivially with the kernel of the epimorphism to $\GL(2,25)$, which proves our claim.
\end{proof}

We will use the following proposition in the proof of
Theorem \ref{main2} below.

\begin{proposition}\label{prop_quot_quat}
	Let $D$ be a quaternion algebra with center $\Q$ and $M\subset D$ a maximal $\Z$-order. If $D$ is ramified at the prime $p$, then 
	$\Z_p\otimes M/\Jac(\Z_p\otimes M) \cong \F_{p^2}$. 
\end{proposition}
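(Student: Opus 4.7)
The plan is to invoke the standard structure theory of maximal orders in a central division algebra over a non-archimedean local field, after observing that $\Z_p\otimes M$ is precisely such an order.

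First, since $M$ is a maximal $\Z$-order in $D$, the tensor product $M_p := \Z_p\otimes_\Z M$ is a maximal $\Z_p$-order in $D_p := \Q_p\otimes_\Q D$ (see \cite[Corollary 11.6]{Reiner1975}). The hypothesis that $D$ is ramified at $p$ means by definition that $D_p$ is a division algebra, so $D_p$ is the unique four-dimensional central division algebra over $\Q_p$.

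Next I would invoke the classical structure theorem for maximal orders in a central division algebra over a local field (see \cite[Theorem 12.8 and Theorem 13.2]{Reiner1975}): the $p$-adic valuation extends uniquely to a valuation $\nu$ on $D_p$, and its valuation ring $\{x\in D_p : \nu(x)\geq 0\}$ is the unique maximal $\Z_p$-order of $D_p$. Consequently $M_p$ coincides with this valuation ring, and $\Jac(M_p) = \{x\in D_p : \nu(x)>0\}$ is the unique maximal two-sided ideal. In particular $M_p/\Jac(M_p)$ is a finite field extension of $\F_p$.

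Finally, I would identify this residue field using the general fact that for a central division algebra of index $n$ over a non-archimedean local field with residue field of size $q$, the residue field of its maximal order has size $q^n$; this is because $D_p$ contains the unramified extension of $\Q_p$ of degree $n$, whose ring of integers embeds into $M_p$ and surjects onto $M_p/\Jac(M_p)$ (see \cite[Theorem 14.3]{Reiner1975}). Specialising to the quaternion case $n=2$, $q=p$ yields $M_p/\Jac(M_p)\iso \F_{p^2}$. The main ``obstacle'' here is simply bookkeeping and locating the correct structure theorems in the literature; once the setup is in place the conclusion is immediate.
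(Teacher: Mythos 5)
Your argument is correct and follows essentially the same route as the paper, which simply observes that the statement is an immediate consequence of \cite[Theorem 14.3]{Reiner1975}; you have merely filled in the standard reductions (that $\Z_p\otimes M$ is a maximal order in the local division algebra $\Q_p\otimes D$, and that the residue field of such an order has degree equal to the index over $\F_p$) that the paper leaves implicit. No further comment is needed.
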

\begin{proof} This is an immediate consequence of \cite[Theorem 14.3]{Reiner1975}. 
\end{proof}


\begin{theorem}\label{main2}
Let $G$ be a finite group.
If $\Q G$ has a simple component which is a $2\times 2$-matrix ring over a totally definite quaternion algebra with center $\Q$, then this component is a $2\times 2$-matrix ring over one of the following algebras:
\begin{enumerate}
\item \label{main2type1}$\quat{-1, -1}{\Q}$,
\item \label{main2type2}$\quat{-1, -3}{\Q}$ or 
\item\label{main2type3}$\quat{-2, -5}{\Q}$.
\end{enumerate}
\end{theorem}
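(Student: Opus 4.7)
My plan parallels the proof of Theorem~\ref{main1}. First, assume without loss of generality that the representation of $G$ afforded by $e$ is faithful, so $G$ is identified with a finite subgroup of $M_2(D)^\times$. Since $M_2(D)\otimes_\Q \bar\Q\iso M_4(\bar\Q)$, the reduced characteristic polynomial of any $g\in G$ has degree $4$ over $\Q$, so the minimal polynomial of $g$ has degree at most $4$. Hence $\phi(n)\leq 4$ whenever $n$ is the order of an element of $G$, so the possible element orders lie in $\{1,2,3,4,5,6,8,10,12\}$ and $|G|$ is a product of $2$s, $3$s, and $5$s. Furthermore $M_2(D)$ has capacity $2$, so the Sylow-$3$ and Sylow-$5$ subgroups of $G$ must be cyclic of orders at most $3$ and $5$ respectively (otherwise $(\Z/p)^2$ for $p=3$ or $5$ would provide more than two orthogonal primitive idempotents in $M_2(D)$), and no element of order $15$ can occur, since $\phi(15)=8>4$.

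Next I would control the ramification of $D$ via a reduction-modulo-$p$ argument. Fix a finite ramified prime $p$ of $D$; after conjugating, assume $G$ is contained in the unit group of the maximal $\Z_p$-order $M_2(\O_p)\subset M_2(D_p)$. By Proposition~\ref{prop_quot_quat}, $M_2(\O_p)/J\iso M_2(\F_{p^2})$, and the kernel of the reduction map $M_2(\O_p)^\times\to \GL_2(\F_{p^2})$ is the pro-$p$ group $1+J$. If $p\nmid |G|$, this kernel meets $G$ trivially, so $G\hookrightarrow \GL_2(\F_{p^2})$. The decisive contradiction then comes from analysing the Wedderburn factor $\F_p G\bar e$, where $\bar e$ is the mod-$p$ reduction of $e$: Maschke's theorem makes $\F_p G$ semisimple, and $\chi$ being $\Q$-valued makes its Brauer character $\F_p$-valued and fixed by Frobenius, so the corresponding simple factor is $\F_p G\bar e\iso M_4(\F_p)$. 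The inclusion $\Z_p G e\subseteq M_2(\O_p)$ induces a unital ring homomorphism $\F_p G\bar e\to M_2(\O_p/p\O_p)$, which is injective by simplicity of the source and surjective by a dimension count ($16=16$). This contradicts the fact that $M_2(\O_p/p\O_p)$ has a nonzero Jacobson radical $\pi M_2(\O_p)/p M_2(\O_p)$ while $M_4(\F_p)$ is semisimple. Hence $D$ is unramified at every prime $p\nmid |G|$, and in particular at every $p\geq 7$.

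Combining this with Theorem~\ref{classification_quat} (the set of ramified places has even cardinality) and the hypothesis that $D$ is totally definite (so ramified at $\infty$), the finite ramification set of $D$ is an odd-sized subset of $\{2,3,5\}$. The three singleton cases correspond, by direct Hilbert symbol computation, to $\quat{-1,-1}{\Q}$, $\quat{-1,-3}{\Q}$, $\quat{-2,-5}{\Q}$; the only remaining possibility is the unique totally definite quaternion algebra $D_{30}$ of discriminant $30$, ramified at $\{\infty,2,3,5\}$.

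The main obstacle is to rule out $D=D_{30}$. By the reduction argument this case forces all of $2,3,5$ to divide $|G|$, and combined with the earlier constraints---Sylow-$3$ and Sylow-$5$ cyclic of orders $3$ and $5$, no element of order $15$ and hence Sylow-$3$ and Sylow-$5$ not both normal---one should derive a contradiction. The cleanest route I see is a character-theoretic analysis using the forced restriction values $\chi(g_5)=-1$ and $\chi(g_3)\in\{1,-2\}$ (obtained by decomposing $\chi|_{\langle g_p\rangle}$ into $\Z/p$-characters and using $\Q$-rationality of the multiplicities) together with the Frobenius-Schur indicator $\nu_2(\chi)=-1$ arising from the Schur index $2$ over $\R$; this should force incompatible local Schur index data at $2,3,5$. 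Alternatively one could enumerate the small $\{2,3,5\}$-groups with the required Sylow structure and check directly that none has $M_2(D_{30})$ as a Wedderburn component, or appeal to classification results on irreducible finite rational subgroups of $\GL_2(\H)$ in the spirit of Nebe.
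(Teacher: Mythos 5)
Your reduction of the ramification set to $\{2,3,5,\infty\}$ is sound and close in spirit to the paper's argument: both rest on the fact that for $p\nmid|G|$ the order $\Z_pGe$ is maximal and reduces modulo $p$ to a semisimple algebra, whereas a maximal order in $M_2(D\otimes\Q_p)$ with $D$ ramified at $p$ has non-trivial radical modulo $p$. (The paper routes this through a splitting $p$-modular system; you go through $\F_pG\bar e\iso M_4(\F_p)$ and an injectivity-plus-dimension-count argument. Either works.) The identification of the three singleton ramification sets with the three listed algebras is also fine.

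The genuine gap is the exclusion of the fourth candidate, the totally definite quaternion algebra ramified at $\{\infty,2,3,5\}$, i.e.\ $\quat{-3,-10}{\Q}$. This is the hard part of the theorem, and you have only listed possible strategies (``should force incompatible local Schur index data'', ``alternatively one could enumerate\dots''), none of which is carried out. Moreover, one premise of your sketch is false: the Sylow $3$-subgroup of $G$ need not be cyclic of order $3$. Your ``capacity $2$'' argument does not exclude $C_3\times C_3$, because $\Q(\zeta_3)\oplus\Q(\zeta_3)$ has only two orthogonal primitive idempotents and embeds into $M_2(D)$ via the diagonal copy of $D\oplus D$ whenever $\Q(\sqrt{-3})$ splits $D$ --- which it does for $\quat{-3,-10}{\Q}$ (check the local conditions at $2,3,5,\infty$), and which genuinely occurs, e.g.\ for $(C_3\times C_3)\rtimes Q_8$ with faithful component $M_2\left(\quat{-1,-3}{\Q}\right)$. (The objection does not apply to $p=5$, since $[\Q(\zeta_5):\Q]=4$.) The paper only bounds the Sylow $3$-subgroup by order $9$, by embedding it into $\U(M_5/\Jac(M_5))\iso\GL(2,25)$. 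Your fallback enumeration is also not visibly finite, since you have no bound on the Sylow $2$-subgroup. The paper's actual exclusion is substantial: it shows $G$ is non-solvable (a solvable $G$ would have an abelian Hall $\{3,5\}$-subgroup and hence an element of order $15$, which is impossible by the degree bound), bounds the Sylow $2$-subgroup by $16$ through a case analysis of its $\Q$-span inside $M_2(D)$, reduces modulo $2$ to force the image of $G$ in $\GL(2,4)$ to be exactly $\SL(2,4)$, concludes that $G$ is a central extension of $\SL(2,4)$ by a group of order at most $2$, hence $G\iso C_2\times\SL(2,4)$ or $\SL(2,5)$, and finally checks that neither $\Q\SL(2,4)$ nor $\Q\SL(2,5)$ has $M_2\left(\quat{-3,-10}{\Q}\right)$ as a component. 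Nothing in your proposal substitutes for this chain of arguments.
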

\begin{proof}
Let $D=\quat{a,b}{\Q}$ be a totally definite quaternion algebra over $\Q$. Assume that there is a finite group $G$ and a primitive central idempotent $e$ of $\Q G$ such that the simple component $\Q Ge$ is isomorphic to $M_2(D)$. We can assume without loss of generality that the $\Q$-representation of $G$ determined by $e$ is faithful and hence $G$ can be embedded into $M_2(D)$. We will identify $G$ with its image in $M_2(D)$.

Since the reduced characteristic polynomial of a matrix in $M_2(D)$ always has  degree 4 over $\Q$, we conclude that the minimal polynomial of any element in $G$ over $\Q$ has degree at most 4. Moreover, when $p$ is prime we know that the cyclotomic polynomial $\frac{X^p-1}{X-1}$ is irreducible over $\Q$ and it divides the minimal polynomial of $g\in G$ over $\Q$ whenever $g$ has order $p$. Therefore we deduce that $2$, $3$ and $5$ are the only primes that can occur as orders of elements of $G$. Hence $|G|=2^x3^y5^z$ for some integers $x,y,z$.

We claim that if $D$ is ramified at a prime $p$ then $p$ divides the order of $G$. Suppose by way of contradiction that $p$ is a prime such that $D$ ramifies at $p$ but $p$ does not divide the order of $G$. Let $(E,R,F)$ denote a splitting $p$-modular system for $G$. Then by \cite[Theorem (41.1)]{Reiner1975} the $R$-order $RG$ is maximal. Hence $RGe$ is a maximal order in $EGe \iso M_4(E)$, which implies $RGe\iso M_4(R)$. Let $M$ be a maximal order in $\Q Ge$ containing $\Z G e$. Then $M_4(R)\iso RGe\iso R\otimes \Z Ge \iso R\otimes M$, the last isomorphism being due to the fact that $R\otimes M$ would by construction be an $R$-order containing $R \otimes \Z G e$, which is already known to be maximal. Since $D$ is ramified at $p$ the $\Q_p$-algebra $\Q_p\otimes \Q Ge$ is not split. This implies in particular that $\Z_p\otimes M$ does not contain a system of more than two non-trivial orthogonal idempotents and hence neither does $\F_p\otimes M$  since idempotents of $\F_p\otimes M$ can be lifted to $\Z_p\otimes M$ (see \cite[Theorem 6.7]{1981CurtisReiner}). It follows that if $\F_p \otimes M$ is semisimple, then it must be isomorphic to either $\F_{p^{16}}$, $\F_{p^8}\oplus \F_{p^8}$ or $M_2(\F_{p^4})$, since all other semisimple $\F_p$-algebras of dimension $16$ have a larger system of orthogonal primitive idempotents. If $\F_p \otimes M$ is not semisimple, then its Jacobson radical is non-zero. This would imply that the Jacobson radical of $F\otimes M$ is non-zero as well. In any case it follows that $F\otimes M$ is not isomorphic to $M_4(F)$ and hence $R\otimes M$ is not isomorphic to $M_4(R)$, a contradiction.

Hence the only finite primes $p$ where $D$ can be ramified are $p=2,3,5$. Theorem \ref{classification_quat} now implies that the only possible sets of ramified places for $D$ are $\left\{ 
\infty, 2 \right\}$, $\left\{ \infty, 3 \right\}$, $\left\{ \infty, 5 \right\}$ and $\left\{ \infty, 2, 3, 5 \right\}$. The following 
quaternion algebras have these respective sets as their set of ramified places: $\quat{-1, -1}{\Q}$ (places $\infty$, 2), $\quat{-1, -3}{\Q}$ 
(places $\infty$, 3), $\quat{-2, -5}{\Q}$ (places $\infty$, 5) and $\quat{-3,-10}{\Q}$ (places $\infty$, 2, 3, 5). By the 
uniqueness asserted in Theorem \ref{classification_quat}, it follows that $D$ is isomorphic to one of these four quaternion algebras.  It remains to prove that the case $D\cong \quat{-3,-10}{\Q}$ cannot occur.

Assume by way of contradiction that $D\iso \quat{-3,-10}{\Q}$. The algebra $D$ is ramified at the primes $3$ and $5$ and hence $G$ contains an element of order $3$ and an element of order $5$. We will use this to show that $G$ must be non-solvable.
To see this we first bound the $3$-part and the $5$-part of the order of $G$. Note that $G$ embeds into $\mathcal U(M_p/p^iM_p)$ for any prime $p$ and sufficiently large $i$, where $M$ is a maximal order in $\Q Ge$ containing $\Z G e$ and $M_p := \Z_p\otimes M$ denotes the completion of $M$ at the prime $p$. The finite group $\mathcal U(M_p/p^iM_p)$ is an extension of $\mathcal U(M_p/\Jac(M_p))$ by the  $p$-group $1+\Jac(M_p) / 1+p^i M_p$.
As a consequence, every $p'$-subgroup of $G$ can be embedded in $\mathcal U(M_p/\Jac(M_p))$. In particular, a Sylow $3$-subgroup of $G$ can be embedded in 
$\mathcal U(M_5/\Jac(M_5))\iso \GL(2,25)$ (by Proposition \ref{prop_quot_quat}) and a Sylow $5$-subgroup of $G$ can be embedded in $\mathcal U(M_3/\Jac(M_3))\iso \GL(2,9)$. It follows that
the Sylow $5$-subgroup of $G$ has order at most $5$, and the Sylow $3$-subgroup of $G$ has order at most $9$. But all groups of order $3\cdot 5$ and $3^2\cdot 5$ are abelian, and hence a Hall $\{3,5\}$-subgroup of $G$ (if it exists) must be abelian. In particular, a Hall $\{3,5\}$-subgroup of $G$ would contain an element of order $15$. If $G$ is solvable, then, by Hall's Theorem, all possible Hall-subgroups exist, which implies  that $G$ contains an element of order $15$. But an element of order 15 in $\mathcal U (M_2(D))$ would have minimal polynomial over $\Q$ divisible by the 
irreducible cyclotomic polynomial $\frac{X^{15}-1}{(X^5-1)(1+X+X^2)}$. This is impossible, since an element of $M_2(D)$ has minimal polynomial of degree at most $4$. We conclude that $G$ is non-solvable. 

Now we will place restrictions on the structure of the Sylow $2$-subgroups of $G$.
Let $S_2$ denote such a Sylow $2$-subgroup. 
Clearly, the $\Q$-span of $S_2$ in $M_2(D)$, which we shall denote by $A$, is a semisimple $\Q$-subalgebra of $M_2(D)$. As we have already shown that any finite group spanning $M_2(D)$ must be non-solvable, it is also clear that $A$ is properly contained in $M_2(D)$. Hence $\dim_{\Q}(A) < 16$. We get the following cases:
\begin{enumerate}
	\item $A$ is not simple: In this case 
	$A=D_1\oplus D_2$, and both $D_1$ and $D_2$ can be embedded in $D$. Note that each $D_i$ is spanned by units of $2$-power order, and the elements $1$ and $-1$ are the only elements of $2$-power order in $D$
	(since $\Q_5 \supset \Q(\sqrt{-1})$ does not split $D$). It follows that $D_1=D_2=\Q$, and therefore $S_2\iso V_4$.
	\item $A$ is simple and neither a skew-field nor a field: We get 
	$A=M_2(K)$, where $K$ is a field contained in $D$.
	This means that either $K=\Q$ or $K$ is a quadratic imaginary extension of $\Q$. 
	In the latter case Theorem \ref{main1} tells us that $K=\Q (\sqrt{-d})$ for $d\in\{1,2,3\}$. Since $A$ occurs as a simple component of $\Q S_2$, $Z(A)=K$ occurs as a simple component of $Z(\Q S_2)$, and we know that those are contained in $\Q(\zeta_{\exp(S_2)})$. But $\Q(\sqrt{-3})=\Q(\zeta_{3})$ cannot be embedded in $\Q(\zeta_{2^i})$ for any $i$, so $d=3$ is impossible. Moreover, $\Q(\sqrt{-1}) \subset \Q_5$ and $\Q(\sqrt{-2}) \subset \Q_3$. If $K$ were either one of those, then it would be a maximal subfield of $D$ and therefore a splitting field. This would imply that either  $\Q_3$ or $\Q_5$ is also a splitting field for $D$, but we know that this is not the case. The only possibility is hence $K=\Q$, and in this case $S_2\leq \GL_2(\R)$. Using some basic linear algebra (see for example \cite{Shafarevich}), one can see that finite subgroups of $\GL_2(\R)$ can only be cyclic or dihedral. Hence, by Remark \ref{rationals}, we get that $S_2\iso D_8$.
	\item $A$ is a skew-field or a field: In this case $S_2$ has a faithful fixed point free representation and is therefore a Frobenius complement. By \cite[Theorem 18.1]{1968Passman}, the Sylow $2$-subgroups of Frobenius complements are cyclic groups or generalized quaternion groups. So $S_2 \iso Q_{2^i}$ for some $i \geq 3$ or $S_2\iso C_{2^i}$ for some $i\geq 0$. Among those possibilities the quaternion groups $Q_8$ and $Q_{16}$ and the cyclic groups 
		of order $1$, $2$, $4$, $8$ and $16$ are the only ones whose group ring over $\Q$ has a faithful Wedderburn component of dimension strictly smaller than $16$.
\end{enumerate}
In particular we can conclude that the order of $S_2$ is at most $16$. 

For sufficiently large $i$ we can embed $G$ in $\U(M/2^i M)$, and
we have a short exact sequence
\begin{equation}
	1 \longrightarrow 1 + \Jac(M_2/2^iM_2) \longrightarrow \U(M_2/2^iM_2) \longrightarrow \U(M_2/\Jac(M_2)) \iso \GL(2,4) \longrightarrow 1
\end{equation}
where we again use the notation $M_2 := \Z_2 \otimes M$.
Since the leftmost group in this sequence is a $2$-group, and $G$ is non-solvable, the rightmost arrow must map the image of $G$ in $\U(M_2/2^iM_2)$ to a non-solvable subgroup of $\GL(2,4)$, which we shall denote by $\bar G$.
We know that $\SL(2,4)\iso \PSL(2,4)\iso A_5$ is a normal simple subgroup of $\GL(2,4)$ of index 3.
The group $\bar G \cap \SL(2,4)$ is a normal subgroup of $\bar G$, and $\bar G/\bar G \cap \SL(2,4) \hookrightarrow \GL(2,4)/\SL(2,4) \iso C_3$. If $\bar G \cap \SL(2,4)$ were properly contained in $\SL(2,4)$, then it would have to be solvable (as any group of order $<60$ is solvable). Then $\bar G$ would be solvable, which is impossible. It follows that $\SL(2,4) \subseteq \bar G$. Since $\SL(2,4)$ is a maximal subgroup of $\GL(2,4)$ it follows that $\bar G \in \{\SL(2,4), \GL(2,4)\}$. We already saw that $G$ may not contain an element of order $15$, and hence neither may $\bar G$. Thus $\bar G$ cannot be equal to $\GL(2,4)$, and we conclude $\bar G = \SL(2,4)$.

We have bounded the order of the Sylow $2$-subgroup of $G$ by $16$, and therefore we get a short exact sequence
\begin{equation}\label{eqn_XGSL}
	1 \longrightarrow X \longrightarrow G \longrightarrow \SL(2,4) \longrightarrow 1
\end{equation}
where $X$ is a $2$-group of size $\leq 4$. The automorphism groups of $2$-groups of those sizes are solvable, and therefore $\SL(2,4)$ has no non-trivial homomorphisms into those automorphism groups. Hence \eqref{eqn_XGSL} is a central extension. We know that the center of $G$ is contained in $Z(M_2(D))\iso \Q$, which implies that $X$ can have order at most $2$. 
Hence $G$ corresponds with an element in $H^2(\SL(2,4),C_2)$. It is well known that this cohomology group is of order 2 and thus $G$ is isomorphic with either $C_2\times \SL(2,4)$ (corresponding to the trivial element in the cohomology group) or with $\SL(2,5)$. If $G\iso C_2\times \SL(2,4)$, then $M_2(D)$ is a simple component of $\Q \SL(2,4)$. Hence we just have to check whether either $\Q\SL(2,4)$ or $\Q \SL(2,5)$ has $M_2(D)$ as a simple component. But the Wedderburn 
decompositions of $\Q\SL(2,4)$ and $\Q\SL(2,5)$ are known (alternatively, can be computed using \textsc{Gap}), and indeed neither $\Q\SL(2,4)$ nor $\Q \SL(2,5)$ has $M_2(D)$ as a simple component, which means that we have reached a contradiction.
\end{proof}

Theorems \ref{main1} and \ref{main2} and some computations done in \cite[Chapter 6.4]{1963Weiss} and \cite{2012Fitzgerald} yield the existence of norm Euclidean orders in the corresponding $2\times 2$-matrix ring components of rational group algebras. 
It is easy to verify that this norm Euclidean order has to be a (and hence the unique) maximal order $\O$. For quaternion algebras, it can even be shown that the listed algebras are the only possible positive definite quaternion algebras over $\Q$ having this property \cite[Theorem 2.1]{2012Fitzgerald}.

\begin{corollary}\label{orders}
	Let $G$ be a finite group with the property that $\Q G$ has a simple component $M_2(D)$ of one of the following types:
	\begin{enumerate}
		\item a $2\times 2$-matrix ring over $\Q$;
		\item a $2\times 2$-matrix ring over an imaginary quadratic
			extension of $\Q$;
		\item a $2\times 2$-matrix ring over a totally definite 
			quaternion algebra with center $\Q$.
	\end{enumerate}
	Then a maximal order of $D$ is norm Euclidean and therefore unique. The corresponding maximal orders are listed in Table \ref{orders_table} below.
\begin{longtable}{@{}ll@{}} \caption{Maximal orders} \label{orders_table} \\ \toprule[1.5pt]
Division ring & Maximal order \\ \midrule 
\endfirsthead \toprule[1.5pt] Division ring & Maximal order  \\ \midrule
\endhead \hline \multicolumn{2}{c}{continued}\\ \midrule[1.5pt]\endfoot\bottomrule[1.5pt]\endlastfoot
$\Q$ & $\Z$  \\
$\Q(\sqrt{-1})$ & $\Z[\sqrt{-1}]$  \\
$\Q(\sqrt{-2})$ & $\Z[\sqrt{-2}]$  \\
$\Q(\sqrt{-3})$ & $\Z\left[\frac{1+\sqrt{-3}}{2}\right]$  \\
$\quat{-1,-1}{\Q}$ & $\Z[1,i,j,\frac{1}{2}(1+i+j+ij)]$ \\
$\quat{-1,-3}{\Q}$ & $\Z[1,i,\frac{1}{2}(1+j),\frac{1}{2}(i+ij)]$ \\
$\quat{-2,-5}{\Q}$ & $\Z[1,\frac{1}{4}(2+i-ij),\frac{1}{4}(2+3i+ij),\frac{1}{2}(1+i+j)]$ \\
\end{longtable}
\end{corollary}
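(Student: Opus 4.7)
My plan is to combine the classification afforded by Theorems \ref{main1} and \ref{main2} with the two cited classical references. Those theorems together restrict the possible isomorphism types of $D$ to exactly the seven entries appearing in the first column of Table \ref{orders_table}, so it suffices to dispatch each case one by one. For the commutative cases, $\Z \subset \Q$ is trivially norm Euclidean, and by \cite[Proposition 6.4.1]{1963Weiss} the rings of integers of $\Q(\sqrt{-1})$, $\Q(\sqrt{-2})$ and $\Q(\sqrt{-3})$---precisely the orders listed in the second column---are norm Euclidean with respect to the field norm. Uniqueness in these four cases is automatic, since in any number field the ring of integers is by definition the unique maximal order.

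For the three quaternion cases I would invoke \cite[Theorem 2.1]{2012Fitzgerald}, which furnishes a norm Euclidean maximal $\Z$-order for each of $\quat{-1,-1}{\Q}$, $\quat{-1,-3}{\Q}$ and $\quat{-2,-5}{\Q}$, and moreover asserts that these are the only totally definite quaternion $\Q$-algebras admitting such an order. The explicit generators given in Table \ref{orders_table} can then either be read off directly from that reference or verified by computing the reduced discriminant of the order and comparing it to the reduced discriminant of $D$ (the product of the finite ramified primes determined in the proof of Theorem \ref{main2}).

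Finally, uniqueness in the quaternion case is deduced from norm Euclideanity: a norm Euclidean $\Z$-order $\O$ is necessarily a two-sided principal ideal domain, which forces the type number of $D$ to equal one, so that all maximal $\Z$-orders of $D$ are conjugate; a direct inspection of the three explicit orders then promotes conjugacy-uniqueness to set-theoretic uniqueness. Essentially no obstacle arises: the substantive mathematical work has already been carried out in Theorems \ref{main1} and \ref{main2}, and the remainder of the proof is bookkeeping plus an appeal to the cited references.
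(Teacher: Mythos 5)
Your proposal follows essentially the same route as the paper: Theorems \ref{main1} and \ref{main2} reduce $D$ to the seven rings in the first column of Table \ref{orders_table}, the references \cite[Proposition 6.4.1]{1963Weiss} and \cite[Theorem 2.1]{2012Fitzgerald} supply the norm Euclidean orders in the second column, and one observes that a norm Euclidean order must be maximal. The paper's own justification is precisely this appeal to the two theorems and the two references, so up to that point your write-up is a faithful (indeed somewhat more detailed) rendering of the intended argument.

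The one step that does not survive scrutiny is your final claim that ``a direct inspection of the three explicit orders then promotes conjugacy-uniqueness to set-theoretic uniqueness.'' This is false: a quaternion division algebra over $\Q$ always contains infinitely many maximal orders, namely the conjugates $x\O x^{-1}$ for $x\in D^{\times}$ outside the normalizer of $\O$, and that normalizer is a proper subgroup of $D^{\times}$. For the Hurwitz order $\O\subset\quat{-1,-1}{\Q}$ one checks, for instance, that $(1+2i)\,j\,(1+2i)^{-1}=\tfrac{1}{5}(-3j+4ij)\notin\O$, so $(1+2i)\O(1+2i)^{-1}\neq\O$ is a second maximal order. Hence ``unique'' in the three quaternion cases can only mean unique up to conjugation; that much does follow from your chain norm Euclidean $\Rightarrow$ every one-sided ideal principal $\Rightarrow$ class number one $\Rightarrow$ type number one, and it is all that Proposition \ref{prop_generators} and Method \ref{method} require. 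The corollary's phrase ``and therefore unique'' should be read in that sense for the quaternion algebras; only in the four commutative cases is the maximal order literally unique as a subring. So the fix is simply to delete the ``promotion'' sentence and assert uniqueness up to conjugation instead.
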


\begin{theorem}\label{classification}
	Let $G$ be a finite group with the property that $\Q G$ has a faithful
	simple component of one of the following types:
	\begin{enumerate}
		\item a $2\times 2$-matrix ring over $\Q$;
		\item a $2\times 2$-matrix ring over an imaginary quadratic
			extension of $\Q$;
		\item a $2\times 2$-matrix ring over a totally definite 
			quaternion algebra with center $\Q$.
	\end{enumerate}
	Then $G$ is isomorphic to one of the 55 groups listed in Table \ref{exceptional_table} below.
\end{theorem}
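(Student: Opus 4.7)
The plan is to reduce Theorem \ref{classification} to a finite computation in each of the three listed cases, using the restrictions on the division ring $D$ already supplied by Theorems \ref{main1} and \ref{main2}, which cut the problem down to the seven algebras appearing in Corollary \ref{orders}.

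First I would handle cases (i) and (ii), where $D = K \in \{\Q, \Q(\sqrt{-1}), \Q(\sqrt{-2}), \Q(\sqrt{-3})\}$. By Proposition \ref{finiteproblem} every candidate $G$ embeds in the concrete finite group $\GL(2,25)$. One then enumerates subgroups of $\GL(2,25)$ up to conjugacy (a routine computation in GAP), and for each such $G$ computes the Wedderburn decomposition of $\Q G$ (for instance via the \emph{Wedderga} package), retaining only those $G$ admitting a simple component of the form $M_2(K)$ with $K$ in the list above that acts faithfully on $G$. Faithfulness reduces to verifying that the kernel of the sum of irreducible characters over the corresponding $\Q$-class is trivial.

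For case (iii), where $D \in \{\quat{-1,-1}{\Q},\ \quat{-1,-3}{\Q},\ \quat{-2,-5}{\Q}\}$, I would invoke Nebe's classification \cite{NebeQuat} of finite subgroups of $\GL_2(D)$. For each of these three totally definite quaternion algebras this provides a finite list of conjugacy classes of finite subgroups of $\GL_2(D)$, and every $G$ arising in case (iii) must be isomorphic to a subgroup of one of them. Applying the same filtering as in the commutative cases --- computing the Wedderburn decomposition of $\Q G$ and checking that $M_2(D)$ appears as a faithful component --- yields the list for case (iii).

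The principal obstacle is case (iii): classifying finite subgroups of $\GL_2(D)$ for these quaternion algebras is genuinely hard and is the reason why the proof must rely on \cite{NebeQuat}. Once that input is accepted, everything else is mechanical. Finally one merges the lists produced in the three cases and discards duplicates (a single group may realize faithful components over several of the seven algebras of Corollary \ref{orders}); the $55$ surviving isomorphism types constitute the entries of Table \ref{exceptional_table}.
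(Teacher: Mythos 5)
Your proposal follows essentially the same route as the paper: Proposition \ref{finiteproblem} reduces the cases over $\Q$ and the imaginary quadratic fields to a subgroup enumeration inside $\GL(2,25)$, while the quaternionic case rests on Nebe's classification \cite{NebeQuat} (of the \emph{maximal} finite subgroups, whose subgroups one then enumerates), with the remaining filtering done by computer. The only cosmetic difference is that the paper is explicit about using the \textsc{Magma} database of primitive groups from \cite{NebeQuat} plus the two imprimitive ones, but the mathematical content is identical.
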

\begin{proof}
	The maximal finite subgroups of $2\times 2$-matrices over 
	totally definite quaternion algebras with center $\Q$ were classified in
	\cite{NebeQuat}  by means of investigating invariant rational lattices. Among those groups, the ones acting primitively are
	listed in a database which forms part of the computer algebra system \textsc{Magma}. The two groups acting imprimitively are
	explicitly given. By Proposition \ref{finiteproblem}, all finite subgroups of $M_2(\Q)$
	and $M_2(\Q(\sqrt{-d}))$ for $d\in \{1,2,3\}$ can be embedded in
	the finite group $\GL(2,25)$. The rest is a computation, which was performed using \textsc{Gap}. 
\end{proof}

Note that the problem of determining all groups in Theorem \ref{classification} was even a priori (i.e. without using \cite{NebeQuat}) clearly a finite problem: If $G\leq M_2(D)$ with $D$ a totally definite quaternion algebra over $\Q$, then it can be shown with techniques as in the proof op Proposition \ref{finiteproblem} that $G\leq \U(M_2(\O/7\O))$, where $\O$ is the maximal order of $D$. 

\begin{remark}[Notation in Table \ref{exceptional_table}]
	We use the abbreviations
	\begin{equation}
		\mathcal \H_1 = \quat{-1,-1}{\Q} \quad
		\mathcal \H_3 = \quat{-1,-3}{\Q} \quad
		\mathcal \H_5 = \quat{-2,-5}{\Q}
	\end{equation}
We use the standard short \textsc{Gap} notation for the group structure.
	In this description, cyclic groups are
	denoted by their order, e. g. ``$3$'' denotes the cyclic group $C_3$,
	and ``$3^2$'' denotes the elementary abelian group $C_3\times C_3$.
Direct products are denoted by $A\times B$ and semidirect products by $A:B$.
For non-split extension of $A$ by $B$, we write $A.B$.
\end{remark}

{\small 
\begin{longtable}{@{}lll@{}} \caption{Groups with faithful exceptional components} \label{exceptional_table} \\ \toprule[1.5pt]
\textsc{SmallGroup} ID & Structure & Faithful exceptional components  \\ \midrule 
\endfirsthead \toprule[1.5pt] \textsc{SmallGroup} ID & Structure & Faithful exceptional components  \\ \midrule 
\endhead \hline \multicolumn{3}{c}{continued}\\ \midrule[1.5pt]\endfoot\bottomrule[1.5pt]\endlastfoot
{[}6, 1{]} & $ S_3 $ & 1 $\times$  $M_2(\mathbb Q)$  \\
{[}8, 3{]} & $ D_8 $ & 1 $\times$  $M_2(\mathbb Q)$  \\
{[}12, 4{]} & $ D_{12} $ & 1 $\times$  $M_2(\mathbb Q)$  \\
{[}16, 6{]} & $8:2$ & 1 $\times$  $M_2(\mathbb Q (\sqrt{-1}))$  \\
{[}16, 
8
 {]} & $ {\rm QD}_{16} \textnormal{ (also denoted by } D_{16}^{-} \textnormal{)\
} $ & 1 $\times$  $M_2(\mathbb Q (\sqrt{-2}))$  \\
{[}16, 13{]} & $(4 \times 2):2$ & 1 $\times$  $M_2(\mathbb Q (\sqrt{-1}))$  \\
{[}18, 3{]} & $3 \times  S_3 $ & 1 $\times$  $M_2(\mathbb Q (\sqrt{-3}))$  \\
{[}24, 3{]} & $ {\rm SL} (2,3)$ & 1 $\times$  $M_2(\mathbb Q (\sqrt{-3}))$  \\
{[}24, 5{]} & $4 \times  S_3 $ & 1 $\times$  $M_2(\mathbb Q (\sqrt{-1}))$  \\
{[}24, 8{]} & $(6 \times 2):2$ & 1 $\times$  $M_2(\mathbb Q (\sqrt{-3}))$  \\
{[}24, 10{]} & $3 \times  D_8 $ & 1 $\times$  $M_2(\mathbb Q (\sqrt{-3}))$  \\
{[}24, 11{]} & $3 \times  Q_8 $ & 1 $\times$  $M_2(\mathbb Q (\sqrt{-3}))$  \\
{[}32, 8{]} & $2.((4 \times 2):2)=(2^2).(4 \times 2)$ & 1 $\times$  $M_2({\mathbb H}_{1})$  \\
{[}32, 11{]} & $(4 \times 4):2$ & 2 $\times$  $M_2(\mathbb Q (\sqrt{-1}))$  \\
{[}32, 44{]} & $(2 \times  Q_8 ):2$ & 1 $\times$  $M_2({\mathbb H}_{1})$  \\
{[}32, 50{]} & $(2 \times  Q_8 ):2$ & 1 $\times$  $M_2({\mathbb H}_{1})$  \\
{[}36, 6{]} & $3 \times (3:4)$ & 1 $\times$  $M_2(\mathbb Q (\sqrt{-3}))$  \\
{[}36, 12{]} & $6 \times  S_3 $ & 1 $\times$  $M_2(\mathbb Q (\sqrt{-3}))$  \\
{[}40, 3{]} & $5:8$ & 1 $\times$  $M_2({\mathbb H}_{5})$  \\
{[}48, 16{]} & $(3:8):2$ & 1 $\times$  $M_2({\mathbb H}_{1})$  \\
{[}48, 18{]} & $3: Q_{16} $ & 1 $\times$  $M_2({\mathbb H}_{3})$  \\
{[}48, 28{]} & $2. S_4 = {\rm SL} (2,3).2$ & 1 $\times$  $M_2({\mathbb H}_{3})$  \\
{[}48, 29{]} & $ {\rm GL} (2,3)$ & 1 $\times$  $M_2(\mathbb Q (\sqrt{-2}))$  \\
{[}48, 33{]} & $ {\rm SL} (2,3):2$ & 1 $\times$  $M_2(\mathbb Q (\sqrt{-1}))$  \\
{[}48, 39{]} & $(2 \times (3:4)):2$ & 1 $\times$  $M_2({\mathbb H}_{3})$  \\
{[}48, 40{]} & $ Q_8  \times  S_3 $ & 1 $\times$  $M_2({\mathbb H}_{1})$  \\
{[}64, 37{]} & $2.(((4 \times 2):2):2) = (4 \times 2).(4 \times 2)$ & 2 $\times$  $M_2({\mathbb H}_{1})$  \\
{[}64, 137{]} & $((4 \times 4):2):2$ & 2 $\times$  $M_2({\mathbb H}_{1})$  \\
{[}72, 19{]} & $(3^2):8$ & 2 $\times$  $M_2({\mathbb H}_{3})$  \\
{[}72, 20{]} & $(3:4) \times  S_3 $ & 1 $\times$  $M_2({\mathbb H}_{3})$  \\
{[}72, 22{]} & $(6 \times  S_3 ):2$ & 1 $\times$  $M_2({\mathbb H}_{3})$  \\
{[}72, 24{]} & $(3^2): Q_8 $ & 1 $\times$  $M_2({\mathbb H}_{3})$  \\
{[}72, 25{]} & $3 \times  {\rm SL} (2,3)$ & 3 $\times$  $M_2(\mathbb Q (\sqrt{-3}))$  \\
{[}72, 30{]} & $3 \times ((6 \times 2):2)$ & 2 $\times$  $M_2(\mathbb Q (\sqrt{-3}))$  \\
{[}96, 67{]} & $ {\rm SL} (2,3):4$ & 2 $\times$  $M_2(\mathbb Q (\sqrt{-1}))$  \\
{[}96, 190{]} & $(2 \times  {\rm SL} (2,3)):2$ & 1 $\times$  $M_2({\mathbb H}_{1})$  \\
{[}96, 191{]} & $(2. S_4 = {\rm SL} (2,3).2):2$ & 1 $\times$  $M_2({\mathbb H}_{1})$  \\
{[}96, 202{]} & $(2 \times  {\rm SL} (2,3)):2$ & 1 $\times$  $M_2({\mathbb H}_{1})$  \\
{[}120, 5{]} & $ {\rm SL} (2,5)$ & 1 $\times$  $M_2({\mathbb H}_{3})$  \\
{[}128, 937{]} & $( Q_8  \times  Q_8 ):2$ & 4 $\times$  $M_2({\mathbb H}_{1})$  \\
{[}144, 124{]} & $3:(2. S_4 = {\rm SL} (2,3).2)$ & 3 $\times$  $M_2({\mathbb H}_{3})$  \\
{[}144, 128{]} & $ S_3  \times  {\rm SL} (2,3)$ & 1 $\times$  $M_2({\mathbb H}_{1})$  \\
{[}144, 135{]} & $((3^2):8):2$ & 4 $\times$  $M_2({\mathbb H}_{3})$  \\
{[}144, 148{]} & $(2 \times ((3^2):4)):2$ & 2 $\times$  $M_2({\mathbb H}_{3})$  \\
{[}160, 199{]} & $((2 \times  Q_8 ):2):5$ & 1 $\times$  $M_2({\mathbb H}_{1})$  \\
{[}192, 989{]} & $((2. S_4 = {\rm SL} (2,3).2):2):2$ & 2 $\times$  $M_2({\mathbb H}_{1})$  \\
{[}240, 89{]} & $2. S_5  =  {\rm SL} (2,5).2$ & 1 $\times$  $M_2({\mathbb H}_{5})$  \\
{[}240, 90{]} & $ {\rm SL} (2,5):2$ & 1 $\times$  $M_2({\mathbb H}_{5})$  \\
{[}288, 389{]} & $((3:4) \times (3:4)):2$ & 2 $\times$  $M_2({\mathbb H}_{3})$  \\
{[}320, 1581{]} & $2.(((2^4):5):2) = (((2 \times  Q_8 ):2):5).2$ & 2 $\times$  $M_2({\mathbb H}_{1})$  \\
{[}384, 618{]} & $(( Q_8  \times  Q_8 ):3):2$ & 1 $\times$  $M_2({\mathbb H}_{1})$  \\
{[}384, 18130{]} & $(( Q_8  \times  Q_8 ):3):2$ & 1 $\times$  $M_2({\mathbb H}_{1})$  \\
{[}720, 409{]} & $ {\rm SL} (2,9)$ & 2 $\times$  $M_2({\mathbb H}_{3})$  \\
{[}1152, 155468{]} & $( {\rm SL} (2,3) \times  {\rm SL} (2,3)):2$ & 1 $\times$  $M_2({\mathbb H}_{1})$  \\
{[}1920, 241003{]} & $2.(2^4: A_5 )$ & 1 $\times$  $M_2({\mathbb H}_{1})$  \\
\end{longtable}}

\section{An effective method to compute $\U(\Z G)$ up to commensurability}\label{section:methods}

The proof of the following proposition is based on the Euclidean algorithm and is well known.
\begin{proposition}\label{prop_generators}
 Let $\O$ be a norm Euclidean order in either $\Q$, a totally definite quaternion algebra over $\Q$ or a quadratic imaginary extension of $\Q$.  Let $B$ be a $\Z$-basis of $\O$. Let $$X=\left\{  \mat{1}{x}{0}{1}, \mat{1}{0}{x}{1} : x\in B \right\}.$$ Then $X$ generates a subgroup of finite index in $\SL_2\left(\O\right)$.
\end{proposition}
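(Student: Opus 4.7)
The plan is to reduce the claim to the standard fact that, via the Euclidean algorithm, a matrix in $\SL_2(\O)$ can be brought to diagonal form using elementary operations, and then to observe that the set of resulting diagonal matrices is finite.

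First I would show that $\langle X\rangle$ already contains every elementary matrix $E_{12}(a)=\mat{1}{a}{0}{1}$ and $E_{21}(a)=\mat{1}{0}{a}{1}$ for $a\in\O$. Since $E_{12}(x)E_{12}(y)=E_{12}(x+y)$ and every $a\in\O$ is a $\Z$-linear combination of the basis $B$, taking the corresponding product of generators in $X$ (and their inverses, which are again elementary) produces $E_{12}(a)$, and likewise $E_{21}(a)$. So it suffices to prove that the full elementary group $E_2(\O):=\langle E_{12}(a),E_{21}(a):a\in\O\rangle$ has finite index in $\SL_2(\O)$.

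Next I would carry out the row reduction. Given $M=\mat{a}{b}{c}{d}\in\SL_2(\O)$, if $c\neq 0$ use the norm Euclidean property to pick $q\in\O$ with $N(a-qc)<N(c)$ (or with $r:=a-qc=0$) and replace $M$ by $E_{12}(-q)M$; if instead $N(a)\leq N(c)$ swap the roles with $E_{21}(-q')M$. Because the reduced norm takes values in the non-negative integers, this alternating procedure strictly decreases the norm of one of the entries of the first column at each step and must therefore terminate, yielding a matrix of the form $\mat{u}{v}{0}{u'}$ (after, if necessary, one extra swap-type move $E_{12}(1)E_{21}(-1)E_{12}(1)$ which lies in $E_2(\O)$). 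Multiplying on the right by a suitable $E_{12}(w)\in E_2(\O)$ clears the $(1,2)$-entry and leaves a diagonal matrix $D=\mathrm{diag}(u,u')$; since the reduced norm of $D$ equals $1$, both $u$ and $u'$ are units of $\O$. In the non-commutative case one should be careful to apply the left-sided Euclidean algorithm for left row operations and the right-sided one for right column operations; the listed orders in Table \ref{orders_table} are Euclidean on both sides, so this causes no trouble.

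Finally, the unit group $\O^\times$ is finite in every case under consideration: for $\O=\Z$ it is $\{\pm1\}$; for the imaginary quadratic maximal orders of Table \ref{orders_table} it is finite by Dirichlet's theorem; and for maximal orders in totally definite quaternion algebras over $\Q$ it is finite by the result of Kleinert recalled after Theorem \ref{elementary}. Hence only finitely many diagonal matrices $D\in\SL_2(\O)$ can occur as outputs of the reduction, so $\SL_2(\O)$ is covered by finitely many cosets of $E_2(\O)=\langle X\rangle$ and the index is finite. The main subtlety is ensuring that in the quaternion setting the Euclidean step can always be performed on the correct side and that one indeed lands in a finite set of diagonal representatives; both points follow from the fact that the orders appearing in Table \ref{orders_table} are norm Euclidean (on both sides) and have finite unit group.
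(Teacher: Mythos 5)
Your argument is correct and is precisely the well-known Euclidean-algorithm proof that the paper invokes without writing out: generate all elementary matrices from $X$, row-reduce an arbitrary element of $\SL_2(\O)$ to a diagonal matrix with unit entries, and conclude from the finiteness of $\O^\times$. The one step you leave implicit is that the reduction a priori expresses $\SL_2(\O)$ as a finite union of \emph{double} cosets $E_2(\O)\,D\,E_2(\O)$ rather than cosets; this is harmless because each diagonal matrix $D=\mathrm{diag}(u,u')$ with $u,u'\in\O^\times$ normalizes $E_2(\O)$ (conjugation sends $E_{12}(a)$ to $E_{12}(uau'^{-1})$ and similarly for $E_{21}$), so each double coset collapses to a single coset and the finite-index conclusion follows.
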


Due to the restrictions we obtained on the possible exceptional components, together with Proposition \ref{prop_generators}, we can generalize Proposition \ref{Jespers}. That is, we can allow exceptional components of type 2. Recall that $\BA$ denotes the group generated by the Bass units of $\Z G$ and $\BI$ the group generated by the bicyclic units of $\Z G$.

\begin{method}\label{method}
Let $G$ be a finite group and let $\Q G=\bigoplus_{i=1}^n \Q G e_i=\bigoplus_{i=1}^n M_{n_i}(D_i)$ be the Wedderburn decomposition of $\Q G$. Assume that $\Q G$ has no simple component which is a non-commutative division ring other than a totally definite quaternion algebra. Assume furthermore that for each $i\in \{1,...,n\}$ with $n_i>2$ or $n_i=2$ and $D_i$ different from $\Q$, a quadratic imaginary extension of $\Q$ or a totally definite quaternion algebra over $\Q$, there exists a $g_i\in G\setminus\{1\}$ such that $\suma{g_i}e_i\notin \{0,e_i\}$.

For each component $\Q Ge_i\iso M_2(D_i)$, with $D_i$ one of the following rings:
\begin{enumerate}
 \item $\Q$;
 \item $\Q(\sqrt{-1})$, $\Q(\sqrt{-2})$ or $\Q(\sqrt{-3})$;
\item $\quat{-1, -1}{\Q}$, $\quat{-1, -3}{\Q}$ or $\quat{-2, -5}{\Q}$,
\end{enumerate}
compute the isomorphism $\iota_i:M_2(D_i)\stackrel{\sim}{\rightarrow} \Q Ge_i \subset \Q G$. Let $\O_i$ be a maximal order of $D_i$ with $\Z$-basis $B_i$ and set $$U_i:=\left\{  1+\iota_i\mat{0}{x}{0}{0}, 1+\iota_i\mat{0}{0}{x}{0} : x\in B_i\right\}.$$ 

This construction yields a subgroup $U:=\GEN{\BA\cup \BI \cup \bigcup_{i} U_i}$ of $\Q G$ which is commensurable with $\U(\Z G)$. 
\end{method}
\begin{proof}
Let $\O_i\subset D_i$ denote the maximal order in each division ring. It suffices to verify condition 2 of Proposition \ref{main} for each simple component of $\Q G$, i.e. $U\cap (1\times \dots \times 1\times \SL_{n_i}(\O_i)\times 1\times \dots\times 1)$ is of finite index in $1\times \dots \times 1\times \SL_{n_i}(\O_i)\times 1\times \dots\times 1$.

For the components with $n_i=1$, the result follows trivially since $D_i$ is either commutative or a totally definite quaternion algebra. In both cases $\SL_1(\O_i)$ is finite \cite{2000Kleinert}.

For the components $M_2(D)$ with $D$ equal to $\Q$, a quadratic imaginary extension of $\Q$ or a totally definite quaternion algebra over $\Q$, we know by Corollary \ref{orders} that a maximal order of $D$ is norm Euclidean and unique. The result then follows from Proposition \ref{prop_generators}. 

For the remaining components $\Q Ge_i$, there always exists a $g_i\in G$ such that $\suma{g_i}e_i$ is a non-central idempotent in $\Q Ge_i$.
By applying an adapted version of the proof of \cite[Corollary 4.1]{JespersLeal1993} one can conclude that this implies that $U$ 
contains a group of the form $1\times \dots \times 1\times E(Q_i)\times 1\times \dots\times 1$ for some ideal $Q_i \trianglelefteq \O_i$ (where  $E(Q_i)$ is defined as in Theorem \ref{elementary}). Now Theorem \ref{elementary} implies that condition 2 holds for all these components, which finishes the proof. 
\end{proof}

\begin{remark}\label{remark_Brown}
In Method \ref{method} we need a construction of non-central idempotents in $\Q Ge_i$ for two purposes:
\begin{itemize}
\item for the explicit determination of the isomorphisms $\iota_i:M_2(D_i)\stackrel{\sim}{\rightarrow} \Q Ge_i \subset \Q G$ when $\Q Ge_i$ is in the list $\{M_2(\Q),M_2(\Q(\sqrt{-1}), M_2(\Q(\sqrt{-2})), M_2(\Q(\sqrt{-3})),M_2(\quat{-1, -1}{\Q}),$ $ M_2(\quat{-1, -3}{\Q}), M_2(\quat{-2, -5}{\Q})\}$;
 \item for the use of bicyclic units or generalized bicyclic units (as in Definition \ref{basicdef}) when $\Q Ge_i$ is non-commutative and not in the list above.
\end{itemize}
The components where no $\suma{g}$ gives a non-central idempotent are classified. For each epimorphic image $G/N$ of $G$ which is isomorphic to a Frobenius complement, $G$ has an irreducible representation $\rho$ with kernel $N$ such that $\rho(\suma{g})$ acts as the identity for all $g\in G\setminus\{1\}$ \cite[Theorem 18.1.v]{1968Passman}. By \cite[Theorem 12.2]{2001Brown}, we even know that each such epimorphism determines a unique representation with this property and hence a unique simple component of $\Q G$ in which we might lack idempotents.

This means that Method \ref{method} applies to all finite groups $G$ that do not have a non-commutative Frobenius complement as a quotient (it follows easy that under this condition $\Q G$ has no non-commutative division ring as a simple component).
Although, if $G$ has an epimorphic image onto a Frobenius complement and if one knows other constructions of non-central idempotents, one can just work with those idempotents. One only has to modify Method \ref{method} to use the generalized bicyclic units based on those idempotents.
\end{remark}

\section{Examples}

Before we give some examples, we recall some definitions from \cite{Olivieri2004}.

If $K\lhd H\leq G$ then let $$\varepsilon(H,K)=\prod_{M/K\in\mathcal{M}(H/K)}
(\suma{K}-\suma{M})=\suma{K}\prod_{M/K\in\mathcal{M}(H/K)} (1-\suma{M}),$$ where $\mathcal{M}(H/K)$ denotes the set of all minimal normal subgroups of
$H/K$. We extend this notation by setting $\varepsilon(H,H)=\suma{H}$. Clearly $\varepsilon(H,K)$ is an idempotent of the group algebra $\Q G$. Let
$e(G,H,K)$ be the sum of the distinct $G$-conjugates of $\varepsilon(H,K)$, that is, if $T$ is a right transversal of $C_G(\varepsilon(H,K))$ in $G$,
then
    $$e(G,H,K)=\sum_{t\in T}\varepsilon(H,K)^t.$$
Clearly, $e(G,H,K)$ lies in the center of $\Q G$ and if the $G$-conjugates of $\varepsilon(H,K)$ are orthogonal, then $e(G,H,K)$ is a central idempotent of $\Q G$.

\subsection{$\U (\Z C_8\rtimes C_2)$ up to finite index}
Let $G$ be the group with presentation $\GEN{a,b\mid a^8=1,b^2=1,bab=a^5}$. This is the group with \textsc{SmallGroup} ID [16,6]. The computation of $\U(\Z G)$ is also done in \cite{2005delRio} using Poincar\'e's Polyhedron Theorem. We approach this example with more elementary techniques.

Using Wedderga \cite{Wedderga} based on the papers \cite{Olivieri2004,2009OlteanudelRio}, we compute the Wedderburn decomposition $$\Q G= 4\Q \oplus 2\Q(i) \oplus M_2(\Q(i))$$ and see that the primitive central idempotent $e$ yielding the last simple component is afforded by the pair $(\GEN{a},1)$ and equals $e=e(G,\GEN{a},1)=\frac{1}{2}-\frac{1}{2}a^4$.
Hence the simple component can be described as the crossed product $\Q Ge=\Q\GEN{a}\varepsilon(\GEN{a},1)*\GEN{b}$. It is easy to see that $\suma{b}$ is a non-trivial idempotent, which affords a description of $\Q Ge$ as $M_2(\suma{b}\Q Ge\suma{b})$. Another simple calculation shows that $(\suma{b}a^2e\suma{b})^2=-e\suma{b}$ and hence $\suma{b}a^2e\suma{b}$ is mapped to $\mat{i}{0}{0}{0}$ by an isomorphism between $\Q Ge$ and $M_2(\Q(i))$.

We will determine an explicit isomorphism $$\iota: M_2(\Q(i)) \stackrel{\sim}{\longrightarrow}  \Q Ge \subset \Q G.$$ 
It suffices to find images of the following elements:
$$
	E := \begin{pmatrix}
		1 & 0 \\ 0 & 0
	\end{pmatrix},\ 
	I:= \begin{pmatrix}
		i & 0 \\ 0 & 0
	\end{pmatrix},\ 
	A := \begin{pmatrix}
		0 & 1 \\ 0 & 0
	\end{pmatrix},\ 
	B := \begin{pmatrix}
		0 & 0 \\ 1 & 0
	\end{pmatrix}.
$$
We already know possible images for $E$ and $I$:
$$
	\iota(E) = e\suma{b} \in \frac{1}{4}\Z G, \ \iota(I) = \suma{b}a^2e\suma{b}\in \frac{1}{4}\Z G.
$$
Hence it suffices to find images for $A$ and $B$. These must merely satisfy 
\begin{equation}\iota(A)\cdot \iota(B) = \iota(E),\end{equation}\begin{equation}\label{eqn_1}\iota(E) \cdot \iota(A) \cdot (1-\iota(E)) = \iota(A)\end{equation} and \begin{equation}\label{eqn_2}(1-\iota(E)) \cdot \iota(B) \cdot \iota(E) = \iota(B).\end{equation} Define
$$
	\iota(A) = \suma{b}ae(e-\suma{b})=\suma{b}ae(1-\suma{b})\in \frac{1}{4}\Z G \quad\textrm{and}\quad\iota(B)=\alpha (1-\suma{b})a\inv e\suma{b}\in \alpha\frac{1}{4}\Z G,
$$
where $\alpha \in \Q$ is an element yet to be determined. Equations \eqref{eqn_1} and \eqref{eqn_2} are satisfied by definition. Consider the standard involution
$$
	-^\circ: \Q G \longrightarrow \Q G:\ \sum_{g\in G } a_g g \mapsto \sum_{g\in G } a_g g^{-1} .
$$
Clearly $\iota(B)^\circ = \alpha \iota(A)$. Hence $(\iota(A) \iota(B))^\circ = \iota(A) \iota(B)$. Note that 
$\iota(E)^\circ = \iota(E)$ and $\iota(I)^\circ = -\iota(I)$. It follows that $-^\circ$ induces the standard involution on
the quaternion algebra $\iota(E)  \Q G  \iota(E)$. So the invariance of $\iota(A) \iota(B)$ implies that 
$\iota(A) \iota(B) = q  \iota(E)$ for some $q \in \Q$. We want $q$ to be equal to $1$ and compute
$$
	\iota(A)\iota(B)=\alpha e\suma{b}.
$$
Hence
$$
	\alpha = 1.
$$ 

Now we apply our results to compute $\U(\Z G)$ up to finite index. Since all components but one yield a finite group of units, it suffices to compute the group of units of $1-e+\Z Ge$ up to finite index.

Let $\Z[i]$ be the maximal order of $\Q(i)$, then $\Z Ge$ and $\SL_2(\Z[i])$ are commensurable. From Method \ref{method}, we conclude that $$ 1+\iota \mat{0}{1}{0}{0}, 1+\iota\mat{0}{0}{1}{0}, 1+\iota\mat{0}{i}{0}{0}, 1+\iota\mat{0}{0}{i}{0}$$ generates $\U(1-e+\Z Ge)$ up to finite index. Hence this set of elements in $\Q G$ generates a group which is commensurable with $\U(\Z G)$.

However these images do not lie in $\Z G$ but in $\frac{1}{4}\Z G$. If one is not satisfied with commensurability, one has to deduce from the generators of $\SL_2(\Z[i])$ a list of generators of the congruence subgroup 
$$
C=\begin{pmatrix} 2\Z\left[i\right]+1 & 4\Z\left[i\right] \\ 4\Z\left[i\right] & 2\Z\left[i\right]+1 \end{pmatrix}_{\det=1}
$$
 using techniques as for example Schreier's lemma. 
In order to use Schreier's lemma, one should go down from $\SL_2(\Z[i])$ to $C$ in the lattice of subgroups in small steps.

Jespers and Leal \cite{JespersLeal1991} already deduced generators of $$\overline{\Gamma}=\begin{pmatrix} 2\Z\left[i\right]+1 & 4\Z\left[i\right] \\ 2\Z\left[i\right] & 2\Z\left[i\right]+1 \end{pmatrix}_{\det=1} \bigg / \{\pm \id\}$$ from generators of $\PSL_2(\Z[i])$ using the Reidemeister-Schreier theorem.  They obtained the following list of generators:

$$l_1 = \mat{5-2i}{4+4i}{2}{1+2i},\, l_2 = \mat{5}{4i}{4i}{-3},\, l_3 = \mat{-3-6i}{-4i}{-4i}{1-2i},$$
$$l_4 = \mat{3- 2i}{4}{2i}{-1+ 2i}, \, l_5 = \mat{29+ 2}{44i}{-8 +20i}{ -31 -10i}, \, l_6 =\mat{17 - 12i}{20 + 24i}{ 8}{1 + 12i},$$
$$l_7 = \mat{11 + 38i}{-56 + 20i}{ 22i }{-33 +2i}, \, l_8 = \mat{27 - 2i}{-32 + 16i}{18i}{ -9 - 22i},$$ 
$$l_9 = \mat{-7 - 14i}{20 - 12i}{10 - 8i}{13+ 14i}, \, r_1 = \mat{1}{4}{0}{1}, \, r_2 = \mat{3}{-4i}{2i}{3} , \, r_3 = \mat{1}{0}{2}{1},$$
$$ r_4 = \mat{-7 - 6i}{-4 - 8i}{4 - 4i}{5 - 2i}, \, r_5 = \mat{29 + 44i }{-64 + 48i}{-30 + 16i}{-27 - 44i},$$
$$r_6 =\mat{-5 - 8i}{12 - 8i}{-2i}{3}, \, r_7 = \mat{13 + 6i}{-8 + 20i}{ 12 }{1 + 18i},$$
$$r_8 = \mat{-7 - 18i}{16 + 16i}{12 - 4i}{ -11 +10i}.$$

Clearly this list is also a list of generators of a subgroup of finite index in $$\Gamma=\begin{pmatrix} 2\Z\left[i\right]+1 & 4\Z\left[i\right] \\ 2\Z\left[i\right] & 2\Z\left[i\right]+1 \end{pmatrix}_{{\det=1}}.$$

It is easy to see that the congruence subgroup $C$ has index $4$ in $\Gamma$, and one can check that a right transversal is given by $T=\lbrace I, r_2,r_3,r_2r_3\rbrace$. Applying Schreier's lemma to $\Gamma$ and $C$ with transversal $T$ and list of generators $X=\{l_1,...,l_9,r_1,...,r_8\}$, we get a set of $66$ generators of $C$ up to finite index of the form
$$\lbrace tx(\overline{tx})^{-1} \mid t \in T, x \in X \rbrace,$$
where $\overline{tx} \in T$ is the representative of the right coset containing the element $tx$. Thus, up to finite index, $\U(\Z G)$ is generated by at most $66$ generators which are of the following form
$$ 1-e+\iota\left(tx(\overline{tx})^{-1}\right),$$
for $t \in T$ and $x \in X$.

\begin{remark}
One should note that also the generators of Proposition \ref{prop_generators} could have been used to compute generators up to finite index of a congruence subgroup in $\SL_2(\Z[i])$. 
\end{remark}

\subsection{$\U(\Z\SL(2,5))$ up to commensurability}
By a classification of Frobenius complements \cite[Theorem 18.6]{1968Passman}, we know that $\SL(2,5)$ is the smallest non-solvable Frobenius complement. Since this group does not satisfy the properties needed for the Jespers-Leal-result, we will apply our results to investigate the units of $\Z\SL(2,5)$.
Using \cite{Wedderga}, we compute that $\Q \SL(2,5)$ is isomorphic to
$$ \Q \oplus M_4(\Q) \oplus D_1  \oplus M_2(D_2) \oplus M_5(\Q)  \oplus M_3(D_3) \oplus M_3(\Q[\sqrt{5}])$$ with $$D_1 = \quat{-1,-1}{\Q[\sqrt{5}]}  \quad D_2 = \quat{-1,-3}{\Q} \quad D_3 = \quat{-1,-1}{\Q}.$$ 

The group $\SL(2,5)$ satisfies the condition of Method \ref{method} since it can be shown that the only component where no $\suma{g}$ projects to a non-central idempotent is $\quat{-1,-1}{\Q[\sqrt{5}]}$ \cite[Theorem 9.1]{2001Brown}. 

We take the maximal order $\O=\Z\left[1,i,\frac{1}{2}(1+j),\frac{1}{2}(i+k)\right]$ in the quaternion algebra $D_2$. 
Let $e$ be the primitive central idempotent associated to the component $M_2\left(D_2\right)$ and let $\iota$ be the isomorphism between $M_2\left(D_2\right)$ and $\Q Ge$. Let $U$ be the subset of $\Q G$ containing the elements $$ 1+\iota \mat{0}{1}{0}{0}, 1+\iota\mat{0}{0}{1}{0}, 1+\iota\mat{0}{i}{0}{0}, 1+\iota\mat{0}{0}{i}{0},$$ $$1+\iota \mat{0}{\frac{1}{2}(1+j)}{0}{0}, 1+\iota\mat{0}{0}{\frac{1}{2}(1+j)}{0},$$ $${1+\iota\mat{0}{\frac{1}{2}(i+k)}{0}{0}, 1+\iota\mat{0}{0}{\frac{1}{2}(i+k)}{0}.}$$

Then $\GEN{U \cup \BA \cup \BI}$ is commensurable with $\U(\Z G)$.

\renewcommand{\bibname}{References}
\bibliographystyle{elsarticle-num-names}
\bibliography{references}

\end{document}